\documentclass[10pt]{article}
\usepackage{bbm}
\usepackage{mathrsfs}
\usepackage{amsfonts}
\usepackage{amsthm,amsmath,amssymb,anysize}
\newtheorem{lemma}{Lemma}[section]
\newtheorem{theorem}[lemma]{Theorem}
\newtheorem{remark}[lemma]{Remark}
\newtheorem{proposition}[lemma]{Proposition}

\setlength{\parindent}{1em} \setlength{\baselineskip}{20pt}
\usepackage[numbers,sort&compress]{natbib}
\pagestyle{myheadings}
\marginsize{3.2cm}{3.2cm}{3.6cm}{3cm}
\numberwithin{equation}{section}

\markright{The first cohomology of $\frak{sl}(2,1)$}
\title{\textsf{ The first cohomology of $\frak{sl}(2,1)$
with coefficients in $\chi$-reduced Kac modules and simple modules\footnote{Supported by  the NSF
  of China (11701158, 11601135).}}}
\author{\textsc{Shujuan Wang$^{1}$ and
  \textsc{Wende Liu$^{2,}$}}\footnote{Correspondence:  wendeliu@ustc.edu.cn (W. Liu)}\\
  {\small \textit{$^1$School of Mathematical Sciences, Heilongjiang University,}}\\
\small \textit{Harbin 150080, China} \\
 \small\textit{$^2$School of Mathematics and Statistics,}
\textit{Hainan Normal University,}\\ \small\textit{ Haikou 571158,  China} }
\date{ }
\begin{document}
\maketitle
\begin{quotation}
\small\noindent \textbf{Abstract}:
Over a field of characteristic $p>2,$
 the first cohomology of  the special linear Lie superalgebra $\frak{sl}(2,1)$ with
coefficients in all  $\chi$-reduced Kac modules and simple modules is determined by use of  the weight space decompositions of these modules
relative to the standard Cartan subalgebra of $\frak{sl}(2,1).$

 \vspace{0.2cm} \noindent{\textbf{Keywords}}: $\frak{sl}(2, 1)$, $\chi$-reduce Kac modules, simple $\chi$-modules, cohomology.

\vspace{0.1cm} \noindent \textbf{Mathematics Subject Classification
2000}: 17B50, 17B40

\end{quotation}
\setcounter{section}{0}
\section{Introduction}

Let $\frak{g}$ be a finite-dimensional simple Lie algebra and $M$ a finite-dimensional simple $\frak{g}$-module. 
Over the complex number field $\mathbb{C}$, it is well-known that cohomology $\mathrm{H}^*(\frak{g}, M) = 0$ (see \cite{GL}). 
However, there are very few general theorems helping to compute $\mathrm{H}^*(\frak{g}, M)$ when 
both $\frak{g}$ and $M$ are considered over a field of prime characteristic, even less in the Lie superalgebra situation.
For nontrivial modules of Lie superalgebras, we have only the
same tool, that is the definition, used by researchers  at the birth of the cohomology theory.
Very little seems to be known about the study of modular Lie superalgebra cohomology when coefficient modules are not trivial or adjoint ones,
except \cite{zls, liusun, sunliu}.
In \cite{Kac1,Kac2}, Kac posed the problem of determining the first cohomology of the simple Lie superalgebras with coefficients in the simple modules over a field of characteristic 0.
 By use of the fact that finite-dimensional modules of reductive Lie algebras are  completely reducible, Su and Zhang computed explicitly first cohomology  of the special linear  Lie superalgebra $\frak{sl}(m,n)$ over  a field of characteristic 0
with coefficients in Kac modules and simple modules (see \cite{suzhang}).
In the case of prime characteristic, however, the  complete reducibility  is not true.
In this paper, we overcome this difficulty by
considering  weight-derivations to  determine  the first cohomology of  $\frak{sl}(2,1)$  with coefficients in all  $\chi$-reduced Kac modules and simple modules over a field of characteristic $p>2$.
Lie superalgebra cohomology  is of crucial importance to understand  extensions of modules,
and also extensions of the Lie superalgebras themselves. Moreover,
 it is  an independent topic  in the whole Lie superalgebra theory.
For example, relative  cohomology  plays a central role in the Borel-Weil-Bott theory (see \cite{Penkov});
cohomology  of nilpotent radicals of parabolic subalgebras is key in the Kazhdan-Lusztig theory (see \cite{Brundan}).

  Considering that the simple modules of  $\frak{sl}(2,1)$ are completely determined  over a field of prime characteristic (see \cite[Theorem 3.13]{zhang}),
  we aim to deal with the modular-versions
of Kac's problem on  $\frak{sl}(2,1)$. Our main results are as follows, which are quite different from the corresponding ones of $\frak{sl}(2,1)$ over a field of characteristic 0 (see \cite[Theorems 1.1 and 1.2]{suzhang}).

Hereafter we assume that the underlying field $\mathbb{F}$ is algebraically closed and of characteristic  $p>2$.
Our main results are as follows:
\begin{theorem}\label{1903221616}
 Let $Z^{\chi}(\lambda)$ be the $\chi$-reduced Kac module of $\frak{sl}(2,1)$ with the highest weight $\lambda$. Then
$$\dim\mathrm{H}^1(Z^{\chi}(\lambda))=\left\{\begin{array}{ll}
1,\;\;&\mbox{if}\;\;\chi=0,\;\;\mbox{and}\;\;\lambda=(p-1, p-2)\;\;\mbox{or}\;\;(p-2, 0)\\
0,\;\;&\mbox{otherwise}.
\end{array}\right.$$
\end{theorem}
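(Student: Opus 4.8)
The plan is to compute $\mathrm{H}^1(\frak{g},M)$ for $\frak{g}=\frak{sl}(2,1)$ and $M=Z^{\chi}(\lambda)$ straight from the definition, realizing it as the quotient $Z^1(\frak{g},M)/B^1(\frak{g},M)$ of the space of super-derivations $d\colon\frak{g}\to M$, those satisfying $d([x,y])=x\cdot d(y)-(-1)^{|x||y|}\,y\cdot d(x)$, by the inner derivations $d_m\colon x\mapsto x\cdot m$ with $m\in M$. The first move is to discard all nonzero weights. The standard Cartan subalgebra $\frak{h}$ acts on the cochain space, the coboundary map is $\frak{h}$-equivariant, and the Cartan homotopy formula $\mathcal{L}_h=\iota_h\delta+\delta\iota_h$ shows that any cocycle whose weight $\mu$ satisfies $\mu(h)\neq 0$ is a coboundary. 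Choosing a toral $h$ that separates the finitely many weights occurring, I obtain $\mathrm{H}^1(\frak{g},M)=\mathrm{H}^1(\frak{g},M)_0$, carried entirely by weight-zero cochains. This is precisely where the weight-derivations of the introduction enter, and it reduces the problem to finite-dimensional linear algebra on the weight spaces.

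Next I would analyze weight-zero derivations concretely. With the root-space decomposition $\frak{g}=\frak{h}\oplus\bigoplus_{\alpha}\frak{g}_{\alpha}$ (two even and four odd roots) and the weight-space decomposition $M=\bigoplus_{\mu}M_{\mu}$, a weight-zero derivation $d$ must send $\frak{g}_{\alpha}$ into $M_{\alpha}$ and $\frak{h}$ into $M_0$. Testing the derivation identity on $[h,e_{\alpha}]=\alpha(h)e_{\alpha}$ forces $e_{\alpha}\cdot d(h)=0$ for every root $\alpha$, so that $d(\frak{h})\subseteq M^{\frak{g}}$ lands in the $\frak{g}$-invariants. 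Since $\frak{g}$ is generated by the even $\frak{sl}(2)$-triple together with the odd root vectors, $d$ is completely determined by its values $d(e_{\alpha})\in M_{\alpha}$ on this generating set, with $d|_{\frak{h}}$ recovered from the brackets $[\frak{g}_{\alpha},\frak{g}_{-\alpha}]\subseteq\frak{h}$; the invariance $d(\frak{h})\subseteq M^{\frak{g}}$ then becomes a compatibility constraint on those values. Thus $Z^1(\frak{g},M)_0$ is cut out inside $\bigoplus_{\alpha}M_{\alpha}$ by an explicit finite linear system whose coefficients are the structure constants and the matrices of the root-vector actions on the (known) weight spaces of $Z^{\chi}(\lambda)$.

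I would then quotient by $B^1$. A weight-zero inner derivation $d_m$ has $m\in M_0$ and vanishes on $\frak{h}$, so modulo inner derivations one may normalize the components $d(e_{\alpha})$ along the chosen generators, reducing to a small number of residual scalars. Carrying this out, $\dim\mathrm{H}^1$ becomes an explicit count depending on the invariants $M^{\frak{g}}$ and on the ranks of the odd and even root-action matrices on the weight spaces of $Z^{\chi}(\lambda)$; both are governed by the $\frak{g}_0\cong\frak{sl}(2)\oplus\frak{z}$ composition structure of the top $L_0(\lambda)$, hence by $\lambda$ and $\chi$. Heuristically, $\mathrm{H}^1(\frak{g},M)=\mathrm{Ext}^1(\mathbb{F},M)$ in the supermodule category, so I expect the count to collapse to $0$ for generic $(\lambda,\chi)$ and to jump by one exactly when the trivial module becomes linked into $Z^{\chi}(\lambda)$.

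The case analysis is where the real work, and the main obstacle, lies. When $\chi\neq 0$ the reduced structure should make the relevant invariants and the kernels of the root actions vanish, forcing $\mathrm{H}^1=0$ throughout; the delicate part is $\chi=0$, where the modular $\frak{sl}(2)$-theory degenerates at the Steinberg-type weight $p-1$ and at the reducibility weights $0$ and $p-2$. I expect the two exceptional highest weights $(p-1,p-2)$ and $(p-2,0)$ to be precisely those for which the weight-zero linear system acquires one extra solution that is not an inner derivation. The hard step is to verify rigorously that at these two weights the surviving cocycle is genuinely non-inner and spans a one-dimensional space, and that no other $\lambda$ contributes; this requires careful bookkeeping of the weight multiplicities and of the nilpotent odd actions $e_i^2$ on $Z^{\chi}(\lambda)$ furnished by the module's explicit construction, together with a clean determination of $M^{\frak{g}}$ in each degenerate case.
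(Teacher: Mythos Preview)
Your plan is essentially the paper's approach: reduce to weight-zero derivations (the paper's Lemma~2.1 is exactly your Cartan-homotopy reduction), write down the linear system imposed by the derivation identity on the root vectors, and quotient by inner derivations coming from $M_0$. The paper then carries out precisely the case analysis you anticipate but leave open. The one organizing observation you do not make explicit, and which is what turns your ``hard step'' into a finite computation, is this: the roots of $\frak g$ all have the form $(\mu_1,\mu_2)$ with $\mu_1,\mu_2,\mu_1+\mu_2\in\{-1,0,1\}$, while the weights of $Z^{\chi}(\lambda)$ are $(\lambda_1+j-k,\lambda_2+i+k)$; hence $M_\alpha\neq 0$ for some root $\alpha$ only when $\lambda_1,\lambda_2\in\mathbb{F}_p$ (forcing $\chi(h_i)=0$) and $\lambda_1+\lambda_2\in\{1,0,-1,-2,-3\}$. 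This is the paper's Remark~2.3, and it reduces everything to five short explicit systems, each with only a few admissible $\lambda_1$; the nonzero cohomology appears at $\lambda_1+\lambda_2=-3$ with $\lambda_1=p-1$ and at $\lambda_1+\lambda_2=-2$ with $\lambda_1=p-2$. Your observation that $d(\frak h)\subseteq M^{\frak g}$ is a clean repackaging of constraints the paper extracts by hand (e.g.\ from $e_{21}\varphi(h_i)=0$), and your $\mathrm{Ext}^1$ heuristic is sound for $\chi=0$ but should not be leaned on for $\chi\neq 0$, where the trivial module does not live in the $\chi$-reduced category.
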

\begin{theorem}\label{1903231610}
Let $S^{\chi}(\lambda)$ be the  simple $\chi$-module of $\frak{sl}(2,1)$ with the highest weight $\lambda$. Then
$$\dim\mathrm{H}^1(S^{\chi}(\lambda))=\left\{\begin{array}{lll}
1,\;\;&\mbox{if}\;\;\chi=0,\;\;\mbox{and}\;\;\lambda=(p-1, p-1)\;\;\mbox{or}\;\;(1, 0)\\
2, \;\;&\mbox{if}\;\;\chi=0,\;\;\mbox{and}\;\;\lambda=(p-1, 0), \;\;\mbox{or}\;\;(p-1, 1)\\
0,\;\;&\mbox{otherwise}.
\end{array}\right.$$
\end{theorem}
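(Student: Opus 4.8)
The plan is to compute $\mathrm{H}^1(S^{\chi}(\lambda))=Z^1(\frak{sl}(2,1),S^{\chi}(\lambda))/B^1(\frak{sl}(2,1),S^{\chi}(\lambda))$ directly from the cochain complex, exploiting the weight-space decomposition of $S^{\chi}(\lambda)$ relative to the standard Cartan subalgebra $\frak{h}$. First I would fix the standard basis of $\frak{sl}(2,1)$: a basis $h_1,h_2$ of $\frak{h}$, the even root vectors, and the four odd root vectors, together with their bracket relations and the roots they carry. From the classification of simple $\chi$-modules I would record the explicit weight-space decomposition $S^{\chi}(\lambda)=\bigoplus_{\mu}S^{\chi}(\lambda)_{\mu}$ and the dimension of each weight space; every subsequent step is organized by weight.

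The key structural reduction is that $\frak{h}$ acts on the cochain complex by Lie derivatives and this action is null-homotopic (Cartan's homotopy formula), so $\mathrm{H}^1$ is carried by cochains of weight zero. Hence a class may be represented by a weight-zero cocycle $f$, i.e. a map with $f(\frak{g}_{\mu})\subseteq S^{\chi}(\lambda)_{\mu}$ for each root $\mu$ and $f(\frak{h})\subseteq S^{\chi}(\lambda)_{0}$. Using a coboundary $x\mapsto x\cdot m$ with $m\in S^{\chi}(\lambda)_{0}$ I would then normalize $f|_{\frak{h}}=0$. After this normalization the cocycle is determined by its finitely many values $f(x_{\mu})\in S^{\chi}(\lambda)_{\mu}$ on the root vectors, and the cocycle identity applied to the even--even, even--odd and odd--odd brackets turns into a finite linear system whose solution space, taken modulo the remaining coboundaries, is $\mathrm{H}^1$.

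The bulk of the work is then a case analysis over the pairs $(\chi,\lambda)$. For $\chi\neq 0$ I expect the linear system to force $f$ to be a coboundary, so $\mathrm{H}^1=0$; the reduced enveloping algebra for a nonzero $p$-character is better behaved and the relevant extension groups vanish. For the restricted case $\chi=0$ one must run through the highest weights $\lambda=(\lambda_1,\lambda_2)$ with $\lambda_1,\lambda_2\in\{0,1,\dots,p-1\}$, and here the outcome depends delicately on the weight multiplicities of $S^{0}(\lambda)$, in particular on whether $\lambda$ is typical or atypical and on the boundary values $\lambda_1=p-1$. The four exceptional weights $(p-1,p-1)$, $(1,0)$, $(p-1,0)$ and $(p-1,1)$ should appear as exactly those for which a nonzero class survives, with the dimension ($1$ or $2$) equal to the dimension of the surviving solution space of the linear system.

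The main obstacle will be this $\chi=0$ case analysis, specifically the atypical and boundary weights where the weight-space structure of $S^{0}(\lambda)$ degenerates: getting the exact dimension---distinguishing the one-dimensional from the two-dimensional cases---requires precise multiplicities and careful bookkeeping of how the coboundary space shrinks when certain weight spaces vanish. As a guide and a check I would use the short exact sequence $0\to N\to Z^{0}(\lambda)\to S^{0}(\lambda)\to 0$ relating the Kac module to its simple top, whose long exact sequence in cohomology ties $\mathrm{H}^1(S^{0}(\lambda))$ to the already-computed $\mathrm{H}^1(Z^{0}(\lambda))$ of Theorem \ref{1903221616}.
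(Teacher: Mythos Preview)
Your overall strategy---reducing to weight-zero cocycles via the Cartan homotopy (the paper's Lemma~\ref{lem-weight-der+inner}) and then solving a finite linear system case by case using the explicit weight structure of $S^{\chi}(\lambda)$---is exactly what the paper does. The case split (typical $\lambda$ via $S^{\chi}(\lambda)=Z^{\chi}(\lambda)$ and Theorem~\ref{1903221616}; nonzero $\chi$ by absence of common weights; the remaining short list of atypical restricted weights by hand) is also the same.

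There is, however, a concrete error in your normalization step. You write that after restricting to weight-zero cocycles you will ``normalize $f|_{\frak{h}}=0$'' by subtracting a coboundary $x\mapsto x\cdot m$ with $m\in S^{\chi}(\lambda)_{0}$. But for $m$ of weight $0$ one has $h\cdot m=0$ for every $h\in\frak h$, so this coboundary is identically zero on $\frak h$ and cannot change $f|_{\frak h}$ at all. The values $f(h_1),f(h_2)\in S^{\chi}(\lambda)_{0}$ are genuine invariants of the cohomology class and must be carried through the linear system. This is not a cosmetic point: in the case $\chi=0$, $\lambda=(p-1,0)$ the zero-weight space of $S^{0}(\lambda)$ is spanned by $\langle 1,0,p-1\rangle$, and the paper exhibits a nontrivial class $\psi_4$ with $\psi_4(h_1)=-\langle 1,0,p-1\rangle\neq 0$. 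Its non-innerness is proved precisely by evaluating at $h_1$. If you impose $f|_{\frak h}=0$ you lose $\psi_4$ and would compute $\dim\mathrm{H}^1=1$ instead of $2$ for this weight, missing the distinction between the one- and two-dimensional cases that you yourself flag as delicate.

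The fix is simply to drop that normalization and keep $f(h_1),f(h_2)$ as unknowns in the linear system (which is what the paper does, cf.\ Remark~\ref{1904202222}); the residual coboundary freedom coming from $m\in S^{\chi}(\lambda)_0$ then acts only on the root-vector components and must be quotiented out at the end. Your suggestion to cross-check with the long exact sequence attached to $0\to N\to Z^{0}(\lambda)\to S^{0}(\lambda)\to 0$ is a nice addition the paper does not use; it would help catch exactly this kind of undercount.
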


\section{Preliminaries}
Suppose that $\frak{g}$ is a finite-dimensional  Lie superalgebra and $M$  a finite-dimensional $\frak{g}$-module.
Recall that a $\mathbb{Z}_{2}$-homogeneous linear mapping $\varphi: \frak{g}\longrightarrow M$
is a derivation of parity $|\varphi|$,  if
\begin{eqnarray*}\label{low00}
\varphi([x,y])=(-1)^{|\varphi| |x|}x \varphi(y)-(-1)^{|y|
(|\varphi|+|x|)}y \varphi(x)\; \mbox{for all}\; x, y\in \frak{g}.
\end{eqnarray*}
Hereafter  $\mathbb{Z}_2=\{\bar{0}, \bar{1}\}$ is the field of two elements and
  $|a|$ is the parity ($\mathbb{Z}_2$-degree) of $a$ in the superspace under consideration. In this paper, the symbol $|a|$ always implies that $a$ is $\mathbb{Z}_2$-homogeneous in a superspace.
Let $\mathrm{Der}(\frak{g},M)$  denote the superspace spanned by all the homogeneous derivations from $\frak{g}$ to $M$,
 each element in which is called a derivation.
 For $m\in M$, define the map $\frak{D}_m$ from $\frak{g}$ to $M$ by letting
 $$\frak{D}_m(x)=(-1)^{|x||m|}x m\; \mbox{for all}\; x\in \frak{g}.$$
 Then $\frak{D}_m$ is a homogeneous derivation of parity $|m|$.
 Write $\mathrm{Ider}(\frak{g},M)$ for the subspace spanned by all $\frak{D}_m$ with homogeneous elements  $m\in M$,
 each element in which is called an inner derivation.
 In general, the $\frak{g}$-module $\mathrm{Hom}_{\mathbb{F}}(\frak{g},M)$ contains
 $\mathrm{Ider}(\frak{g},M)$ and $\mathrm{Der}(\frak{g},M)$ as submodules.
 By definition, the
first cohomology of $\frak{g}$ with coefficients in $M$ is
\begin{equation}\label{1906131540}
\mathrm{H}^1(\frak{g},M)=\mathrm{Der}(\frak{g},M)/\mathrm{Ider}(\frak{g},M).
\end{equation}
Let $\frak{h}$ be a Cartan subalgebra  in the even part of $\frak{g}.$ Suppose
$\frak{g}$ and  $M$ possess  weight space decompositions with respect to $\frak{h}$:
$$\mbox{$\frak{g}=\oplus _{\gamma \in \frak{h}^*}\frak{g}_{\gamma}$ and $M=\oplus _{\gamma \in
\frak{h}^*}M_{\gamma}$.}$$
In this manner, $\frak{g}$ is also an $\frak{h}^*$-graded Lie superalgebra and $M$ an $\frak{h}^*$-graded module.
Then the assumption on the dimensions implies that the $\frak{g}$-module $\mathrm{Hom}_{\mathbb{F}}(\frak{g},M)$
has an $\frak{h}^*$-grading structure induced by the ones of $\frak{g}$ and $M$:
$$
\mathrm{Hom}_{\mathbb{F}}(\frak{g},M)=\oplus _{\gamma \in \frak{h}^*}\mathrm{Hom}_{\mathbb{F}}(\frak{g},M)_{(\gamma)},
$$
where
$$
\mathrm{Hom}_{\mathbb{F}}(\frak{g},M)_{(\gamma)}=\{\phi\in \mathrm{Hom}_{\mathbb{F}}(\frak{g},M)\mid \phi(\frak{g}_{\alpha})\subset M_{\alpha+\gamma}, \forall \alpha \in  \frak{h}^*\}.
$$
Moreover, $\mathrm{Hom}_{\mathbb{F}}(\frak{g},M)_{(\gamma)}$ is precisely  the weight space of weight $\gamma$ with respect to $\frak{h}$:
\begin{align*}
\mathrm{Hom}_{\mathbb{F}}(\frak{g},M)_{(\gamma)}&=\mathrm{Hom}_{\mathbb{F}}(\frak{g},M)_{\gamma}\\
&=\{\varphi\in \mathrm{Hom}_{\mathbb{F}}(\frak{g},M)\mid h \varphi=\gamma(h)\varphi, \forall h\in \frak{h}\}.
\end{align*}
It is a standard fact that $\mathrm{Der}(\frak{g},M)$ is an $\frak{h}^*$-graded submodule (hence a weight-submodule with respect to $\frak{h}$) of $\mathrm{Hom}_{\mathbb{F}}(\frak{g},M)$:
\begin{align}\label{derweightdec}
\mathrm{Der}(\frak{g},M)=\oplus _{\gamma \in \frak{h}^*}\mathrm{Der}(\frak{g},M)_{\gamma},
\end{align}
where
\begin{align*}
\mathrm{Der}(\frak{g},M)_{\gamma}=\mathrm{Der}(\frak{g},M)\cap \mathrm{Hom}_{\mathbb{F}}(\frak{g},M)_{\gamma}.
\end{align*}

A linear map  (resp. derivation) in $\mathrm{Hom}_{\mathbb{F}}(\frak{g},M)_{0}$ (resp. $\mathrm{Der}(\frak{g},M)_{0}$)  is called a \textit{weight-map} (resp. \textit{weight-derivation}) with respect to $\frak{h}$.

The following lemma provides a useful  reduction method in computing the first
cohomology, which is a super-version
of Lie algebra case \cite[Theorem 1.1]{RF} (see also \cite[Lemma 3.2]{Bai}): a derivation must be a weight-derivation modulo an inner derivation.
\begin{lemma}\label{lem-weight-der+inner}
Retain the above notations. Then
$$\mathrm{Der}(\frak{g},M)=\mathrm{Der}(\frak{g},M)_{0}+\mathrm{Ider}(\frak{g},M).$$
\end{lemma}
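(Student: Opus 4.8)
The plan is to exploit the weight decomposition \eqref{derweightdec} of $\mathrm{Der}(\frak{g},M)$ and to show that every weight component of nonzero weight is automatically inner, while the weight-zero component already lies in $\mathrm{Der}(\frak{g},M)_0$. Since $\mathrm{Der}(\frak{g},M)$, $\mathrm{Ider}(\frak{g},M)$ and $\mathrm{Der}(\frak{g},M)_0$ are all $\mathbb{Z}_2$-graded superspaces, I would first reduce to the case of a homogeneous derivation $\varphi$ of fixed parity $|\varphi|$. Writing $\varphi=\sum_{\gamma}\varphi_\gamma$ according to \eqref{derweightdec}, the term $\varphi_0$ lies in $\mathrm{Der}(\frak{g},M)_0$ by definition, so it remains only to prove that $\varphi_\gamma\in\mathrm{Ider}(\frak{g},M)$ whenever $\gamma\neq 0$.

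For the main step, fix $\gamma\neq 0$ and set $\varphi:=\varphi_\gamma\in\mathrm{Der}(\frak{g},M)_\gamma$, and choose $h\in\frak{h}$ with $\gamma(h)\neq 0$, which is possible precisely because $\gamma\neq 0$. I would evaluate the derivation identity on the pair $(h,x)$ with $x\in\frak{g}_\alpha$ a weight vector. Since $h$ is even and $[h,x]=\alpha(h)x$, the derivation rule collapses to
$$\alpha(h)\varphi(x)=h\varphi(x)-(-1)^{|x||\varphi|}x\varphi(h).$$
Because $\varphi$ has weight $\gamma$ we have $\varphi(x)\in M_{\alpha+\gamma}$, so $h\varphi(x)=(\alpha+\gamma)(h)\varphi(x)$; substituting and cancelling the $\alpha(h)\varphi(x)$ terms yields the key identity
$$\gamma(h)\varphi(x)=(-1)^{|x||\varphi|}x\varphi(h).$$

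Now set $m:=\gamma(h)^{-1}\varphi(h)$. Since $h\in\frak{g}_0$ and $|h|=\bar 0$, the vector $m$ is homogeneous of weight $\gamma$ and of parity $|m|=|\varphi|$. Dividing the key identity by $\gamma(h)$ gives $\varphi(x)=(-1)^{|x||m|}xm=\frak{D}_m(x)$ for every weight vector $x$, and hence for all $x\in\frak{g}$ by linearity; thus $\varphi_\gamma=\frak{D}_m\in\mathrm{Ider}(\frak{g},M)$. Summing over $\gamma$ gives $\varphi=\varphi_0+\sum_{\gamma\neq 0}\varphi_\gamma\in\mathrm{Der}(\frak{g},M)_0+\mathrm{Ider}(\frak{g},M)$, as required. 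I expect the only delicate point to be the super sign bookkeeping: one must track the Koszul signs through the derivation rule carefully and confirm that $m$ carries exactly the parity $|\varphi|$, so that $\frak{D}_m$ has the same parity as $\varphi$ and reproduces it on the nose.
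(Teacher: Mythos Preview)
Your proof is correct and follows essentially the same route as the paper's: decompose via \eqref{derweightdec}, and for a nonzero-weight component pick $h\in\frak{h}$ with $\gamma(h)\neq 0$, then use the derivation identity on the pair $(h,x)$ to conclude $\varphi=\frak{D}_{\gamma(h)^{-1}\varphi(h)}$. The only cosmetic difference is that the paper normalizes $\gamma(h)=1$ (so $m=\varphi(h)$ directly) and phrases the computation via the module action $(h\varphi)(x)=h\varphi(x)-\varphi([h,x])$ rather than the derivation rule, but the two computations are literally the same identity rearranged.
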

\begin{proof} In view of (\ref{derweightdec}), it suffices to show that a derivation of nonzero weight must be inner.
Suppose $\varphi\in \mathrm{Der}(\frak{g},M)_{\alpha}$ with  nonzero weight (degree)  $\alpha$.
Then there exists $h\in \frak{h}$ such that $\alpha(h)=1$.
It follows that $h\varphi=\alpha(h)\varphi=\varphi$.
Then, for all $x\in \frak{g}$, we have
$$\varphi(x)=(h\varphi)(x)=h\left(\varphi(x)\right)-\varphi\left([h, x]\right)=(-1)^{|x||\varphi|}x\left(\varphi(h)\right).$$
Therefore, $\varphi=\frak{D}_{\varphi(h)}$ is inner.
\end{proof}

Suppose further that $\frak{g}$ is  restricted  with  $p$-mapping $[p]$.
Any simple $\frak{g}$-module $M$ possesses a unique $p$-character $\chi\in \frak{g}^*_{\bar{0}}$,
that is, $x^p-x^{[p]}=\chi(x)^p\mathrm{id}_M$ on $M$ for all $x\in \frak{g}_{\bar{0}}$.
 We call $M$ a $\chi$-module of $\frak{g}$.
For $\chi\in \frak{g}_{\bar{0}}^*$, by definition the $\chi$-reduced enveloping algebra of $\frak{g}$ is $u(\frak{g}, \chi) = U(\frak{g})/I_{\chi}$,
where $I_{\chi}$ is the $\mathbb{Z}_2$-graded two-sided ideal of  $U(\frak{g})$ generated by elements $\left\{x^p-x^{[p]}-\chi(x)\mid x\in \frak{g}_{\bar{0}}\right\}$  (see \cite[p.757]{zhang}).   We often regard $\chi\in \frak{g}^*$ by letting $\chi(\frak{g}_{\bar{1}})=0$.
The category of simple $u(\frak{g}, \chi)$-modules may be identified with the one of simple $\chi$-modules
of $\frak{g}$. The adjoint algebraic group $G_{\bar{0}}$ of $\frak{g}_{\bar{0}}$ acts on $\frak{g}$ by adjoint action,
since $\frak{g}_{\bar{1}}$ is a rational $G_{\bar{0}}$-module.
If $\chi$ and $\chi'$ in the same orbit, then $u(\frak{g}, \chi)\cong u(\frak{g}, \chi')$ (see \cite[Remark 2.5]{wangwq}).


From now on,  we write $\frak{g}$ for the special linear Lie superalgebra $\frak{sl}(2,1)$, which is
the subalgebra of $\frak{gl}(2,1)$ consisting of supertraceless matrices.
The following elements form a basis of $\frak{g}$:
$$\mbox{$h_1:=e_{11}+e_{33}, \;h_2:=e_{22}+e_{33}$ and  $e_{ij}$ with $ 1\leq i\neq j\leq 3,$}$$
where $e_{kl}$ is the $3\times 3$ matrix unit.
Fix the standard Cartan subalgebra $\frak{h}$  of $\frak{g}_{\bar{0}}$ spanned by $h_1$ and $ h_2.$
If $\lambda\in \frak{h}^*$, then we write $(\lambda_1, \lambda_2)$ for $\lambda$, where $\lambda_1=\lambda(h_1), \lambda_2=\lambda(h_2)$.
For $\chi\in \frak{g}^*_{\bar{0}}$,
it is a standard fact that each simple $u(\frak{g}_{\bar{0}}, \chi)$-module $M$  is generated by the unique maximal
vector  of weight $\lambda$.  Then $\lambda_i^p-\lambda_i=\chi(h_i)^p$ for $i=1,2$, since $h_i^{[p]}=h_i$.
In this case we write $M(\lambda)$ instead of $M$ and call $\lambda$  the highest weight of $M(\lambda)$.
Note that $\frak{g}$ possesses  a $\mathbb{Z}$-grading structure $\frak{g}=\frak{g}_{-1}\oplus\frak{g}_{0}\oplus\frak{g}_{1},$ where $\frak{g}_{-1}=\mathbb{F}e_{31}+\mathbb{F}e_{32}, \frak{g}_0=\frak{g}_{\bar{0}}$ and $\frak{g}_1=\mathbb{F}e_{13}+\mathbb{F}e_{23}.$
Regarding $M(\lambda)$  as $u(\frak{g}_{0}\oplus\frak{g}_{1}, \chi)$-module with $\frak{g}_1$
acting trivially,  we may construct the induced module
$$Z^{\chi}(\lambda)=u(\frak{g}, \chi)\otimes_{u(\frak{g}_{0}\oplus\frak{g}_{1}, \chi)}M(\lambda),$$
which is called   $\chi$-reduced Kac module of $\frak{g}$ with the highest weight $\lambda$
(denoted by $Z^{\chi}(M)$ in \cite[p. 759]{zhang} for $M:=M(\lambda)$).
Since $\frak{g}_{\bar{0}}\cong \frak{gl}(2)$, there are two kinds of coadjoint orbits of $\frak{g}_{\bar{0}}^*$ with
representatives (see \cite[Sec. 5.4]{Jan}):
\begin{itemize}
\item[(1)] semi-simple: $\chi\left( h_1\right)= r, \chi\left( h_2\right)= s, \chi\left(e_{12}\right)= 0, \chi\left(e_{21}\right)= 0$.
\item[(2)] nilpotent: $\chi\left( h_1\right)= r, \chi\left( h_2\right)= r, \chi\left(e_{12}\right)= 0, \chi\left(e_{21}\right)= 1.$
\end{itemize}
If $\chi$ is nilpotent or semi-simple with $r=s$,
then
$$\lambda_1^p-\lambda_1=\chi(h_1)^p=\chi(h_2)^p=\lambda_2^p-\lambda_2$$
and hence $\left(\lambda_1-\lambda_2\right)^p=\lambda_1-\lambda_2.$
 It follows that $\lambda_1-\lambda_2$ is in $\mathbb{F}_p$.
 Hereafter $\mathbb{F}_p$ denotes the prime subfield of $\mathbb{F}$.
 In particular, if $\chi(h_1)=\chi(h_2)=0$, then $\lambda_1\in \mathbb{F}_p, \lambda_2\in \mathbb{F}_p$.
For convenience, write
$\langle i, j, k\rangle$ for the element $e_{31}^ie_{32}^j e_{21}^kv_0$ in $Z^{\chi}(\lambda)$,
where $v_0$ is the unique maximal vector of weight $\lambda$ in $M(\lambda)$.
Moreover, the symbol $\langle i, j, k\rangle$ always implies that $k$ is the smallest nonnegative integer
in the residue class containing $k$ modulo $p$.
When $\chi$ is semi-simple,    $Z^{\chi}(\lambda)$ has a basis
$$\left\{\langle i, j, k\rangle\mid \mbox{$i, j=0$ or $ 1$}, k=\left\{\begin{array}{ll}
0, 1, \ldots, \lambda_1-\lambda_2, &\mbox{if  $r=s$ and $\lambda_1-\lambda_2\neq p-1$}\\
0, 1, \ldots, p-1, &\mbox{if $r\neq s$ or $r=s$ but $\lambda_1-\lambda_2=p-1$}
\end {array}\right.\right\}.$$
When $\chi$ is nilpotent,   $Z^{\chi}(\lambda)$ has a basis
$$\left\{\langle i, j, k\rangle\mid \mbox{$i, j=0$ or $ 1$}, k=0, 1, \ldots, p-1\right\}.$$
Below  $\delta_{P}$ means 1 if a proposition $P$ is true, and 0 otherwise.
We list some formulas to be used in the future, which only need a direct computation:
\begin{align}
&h_1\langle i,j,k\rangle=(\lambda_1+j-k)\langle i,j,k\rangle\label{h1},\\
&h_2\langle i,j,k\rangle=(\lambda_2+i+k)\langle i,j,k\rangle\label{h2},\\
&e_{31}\langle i,j,k\rangle=\delta_{i=0}\langle 1,j,k\rangle\label{e31},\\
&e_{32}\langle i,j,k\rangle=(-1)^i\delta_{j=0}\langle i,1,k\rangle\label{e32},\\
&e_{12}\langle i,j,k\rangle=k(\lambda_1-\lambda_2-k+1)\langle i,j,k-1\rangle-\delta_{(i,j)=(1,0)}\langle0,1,k\rangle\label{e12},\\
&e_{13}\langle i,j,k\rangle=\delta_{j=1}(-1)^ik(\lambda_1-\lambda_2-k+1)\langle i,0,k-1\rangle+\delta_{i=1}(j+\lambda_1-k)\langle0,j,k\rangle\label{e13}.
\end{align}
In addition,  if  $\chi$ is semi-simple, then
\begin{align}
&e_{21}\langle i,j,k\rangle=-\delta_{(i,j)=(0,1)}\langle 1,0,k\rangle+\delta_{k\neq p-1}\delta_{k\neq \lambda_1-\lambda_2}\langle i,j,k+1\rangle\label{e21s},\\
&e_{23}\langle i,j,k\rangle=
-\delta_{j=1}\left((-1)^i(\lambda_2+k)-i\right)\langle i,0,k\rangle+\delta_{i=1}\delta_{k\neq p-1}\delta_{k\neq \lambda_1-\lambda_2}\langle 0,j,k+1\rangle\label{e23s}.
\end{align}
If  $\chi$ is nilpotent, then
\begin{align}
&e_{21}\langle i,j,k\rangle=-\delta_{(i,j)=(0,1)}\langle 1,0,k\rangle+\langle i,j,k+1\rangle\label{e21n},\\
&e_{23}\langle i,j,k\rangle=\delta_{j=1}\left((-1)^i(\lambda_2+k)-i\right)\langle i,0,k\rangle+\delta_{i=1}\langle 0,j,k+1\rangle\label{e23n}.
\end{align}

Recall that $Z^{\chi}(\lambda)$ possesses a unique  simple quotient module, denoted by  $S^{\chi}(\lambda)$.
It is a standard fact that each simple $\frak{g}$-module   must be of   form
$S^{\chi}(\lambda)$  with $\chi\in \frak{g}^*_{\bar{0}}$ and $\lambda\in \frak{h}^*$.
Without confusion, in this paper we also use $\langle i, j, k\rangle$
to represent the residue class of $e_{31}^ie_{32}^j e_{21}^kv_0$ in $S^{\chi}(\lambda)$.

Let us sketch some main points on $S^{\chi}(\lambda)$,
and leave out some details (see \cite[Theorem 3.13]{zhang}).
\begin{lemma}\label{2002081211}
\begin{itemize}
\item[(1)]
If $\lambda_1\neq p-1$ and $\lambda_2\neq 0$, then $S^{\chi}(\lambda)=Z^{\chi}(\lambda)$ for all $\chi\in \frak{g}_{\bar{0}}^*$.
\item[(2)]
If $\lambda_1= p-1$ and $\lambda_2\neq 0, p-1$, then $S^{0}(\lambda)$ has a basis $\langle0,0,k\rangle, \langle1,0,k\rangle, \langle0,1,0\rangle$,
and
$$\langle1,1,k\rangle=0, \;\langle0,1,k+1\rangle=(\lambda_2+k+1)\langle1,0,k\rangle,$$
where $0\leq k\leq p-1-\lambda_2$.
\item[(3)]
If $\lambda=(p-1, 0)$, then $S^{0}(\lambda)$ has a basis $\langle0,0,k\rangle, \langle1,0,k\rangle$,
 and
$$\langle1,1,k\rangle=0, \;\langle0,1,k\rangle=k\langle1,0,k-1\rangle,$$
where $0\leq k\leq p-1$.
\item[(4)]
If $\lambda=(p-1, p-1)$, then $S^{0}(\lambda)$ has a basis $\langle1,0,0\rangle, \langle0,0,0\rangle, \langle0,1,0\rangle$,
and
$$\mbox{$\langle1,1,0\rangle=\langle i,j,k\rangle=0$ for $i,j=0$ or 1, $1\leq k\leq p-1.$}$$
\item[(5)]
If $\lambda_2=0, \lambda_1\neq p-1$, then $S^{0}(\lambda)$ has a basis
$\langle0,0,k\rangle, \langle1,0,k\rangle, \langle0,0,\lambda_1\rangle$,
and
$$\langle0,1,k\rangle=k\langle1,0,k-1\rangle, \; \langle0,1,\lambda_1\rangle=\lambda_1\langle1,0,\lambda_1-1\rangle, \;\langle1,1,k\rangle=\langle1,1,\lambda_1\rangle=\langle1,0,\lambda_1\rangle=0,$$
where $0\leq k\leq \lambda_1-1$.
\item[(6)]
Suppose $\chi$ is nilpotent and $\chi(h_1)=\chi(h_2)=0$.
If $\lambda_1=p-1$ or $\lambda_2=0$, then $S^{0}(\lambda)$ has a basis
$\langle0,0,k\rangle, \langle1,0,k\rangle$,
and
$$\langle0,1,k\rangle=(\lambda_2+k)\langle1,0,k-1\rangle, \; \langle1,1,k\rangle=0,$$
where $0\leq k\leq p-1$.
\end{itemize}
\end{lemma}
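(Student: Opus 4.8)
The plan is to realize each simple module as the quotient $S^{\chi}(\lambda)=Z^{\chi}(\lambda)/\mathrm{rad}\,Z^{\chi}(\lambda)$ by the unique maximal submodule $\mathrm{rad}\,Z^{\chi}(\lambda)$. Since $Z^{\chi}(\lambda)$ is generated by the highest weight vector $v_0=\langle 0,0,0\rangle$, every proper submodule avoids $\mathbb{F}v_0$, so $\mathrm{rad}\,Z^{\chi}(\lambda)$ is the sum of all graded submodules not meeting $\mathbb{F}v_0$; in particular it is a weight-submodule, and to describe it I would locate the weight vectors below $v_0$ that are annihilated by the raising operators $e_{12},e_{13},e_{23}$, that is, the singular vectors generating proper submodules. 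First I would record, from (2.3) and (2.4), that the weight of $\langle i,j,k\rangle$ is $(\lambda_1+j-k,\lambda_2+i+k)$; hence each weight space is at most two-dimensional, the only coincidence being between $\langle 1,0,k\rangle$ and $\langle 0,1,k+1\rangle$. This is precisely the mechanism behind the stated relations, each of which records a dependence inside such a two-dimensional space.

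Next I would run the raising operators through the basis using (2.7), (2.8) and (2.10). Applying $e_{13}$ and $e_{23}$ to $\langle 1,1,k\rangle$ and to the pair $\langle 1,0,k\rangle,\langle 0,1,k+1\rangle$ produces linear combinations whose coefficients are the scalars $j+\lambda_1-k$, $(-1)^i(\lambda_2+k)-i$ and $k(\lambda_1-\lambda_2-k+1)$. The presence of a proper submodule is governed by the simultaneous vanishing of the relevant scalars modulo $p$: the factor $j+\lambda_1-k$ degenerates at the top fermionic vector exactly when $\lambda_1=p-1$, the factor $(-1)^i(\lambda_2+k)-i$ at $\langle 0,1,0\rangle$ exactly when $\lambda_2=0$, and the $\frak{sl}(2)$-type factor $k(\lambda_1-\lambda_2-k+1)$ controls the two ends $k=0$ and $k=\lambda_1-\lambda_2$ of the $e_{21}$-string. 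In the generic situation $\lambda_1\neq p-1$ and $\lambda_2\neq 0$ one checks that no weight vector below $v_0$ is annihilated by $e_{12}$, $e_{13}$ and $e_{23}$ at once, so $Z^{\chi}(\lambda)$ is simple; this is case (1), valid for every $\chi$.

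The remaining cases are a bookkeeping of which scalars degenerate. When $\lambda_1=p-1$ the fermionic top vector $\langle 1,1,0\rangle=e_{31}e_{32}v_0$ is annihilated by $e_{12}$ and by $e_{13}$, while $e_{23}$ carries it into the two-dimensional space spanned by $\langle 1,0,0\rangle$ and $\langle 0,1,1\rangle$; a further application of $e_{13}$ annihilates this image, so $\langle 1,1,0\rangle$ generates a proper submodule. Propagating down the $e_{21}$-string then yields the relations $\langle 1,1,k\rangle=0$ and $\langle 0,1,k+1\rangle=(\lambda_2+k+1)\langle 1,0,k\rangle$ of case (2), and the surviving monomials give the asserted basis. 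Symmetrically, when $\lambda_2=0$ the vector $\langle 0,1,0\rangle=e_{32}v_0$ becomes singular and produces the relation $\langle 0,1,k\rangle=k\langle 1,0,k-1\rangle$ of case (5); the doubly degenerate value $\lambda=(p-1,0)$ activates both generators (case (3)), and $\lambda=(p-1,p-1)$ --- where in addition $\lambda_1-\lambda_2=0$ truncates the string to length one --- makes $\langle 1,1,0\rangle$ itself singular and gives case (4). In every instance the dimension of the quotient can be cross-checked against $\dim Z^{\chi}(\lambda)$. For nilpotent $\chi$ with $\chi(h_1)=\chi(h_2)=0$ one repeats the computation with (2.11)--(2.12) replacing (2.9)--(2.10): the $e_{21}$-action loses the cut-off factors $\delta_{k\neq p-1}\delta_{k\neq\lambda_1-\lambda_2}$, so the string runs over the full range $0\leq k\leq p-1$, yielding case (6).

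The main obstacle is twofold. First, since several weight spaces are genuinely two-dimensional, each singular vector is a true linear combination rather than a single monomial, and the super-signs entering (2.5)--(2.12) must be tracked exactly --- a single misplaced sign turns the combination that lies in $\mathrm{rad}\,Z^{\chi}(\lambda)$ into one that does not. Second, after exhibiting generators of the radical one must still check that the quotient admits no further singular vector, i.e. that the stated relations exhaust $\mathrm{rad}\,Z^{\chi}(\lambda)$; I would close this by the dimension count together with the fact, from \cite[Theorem 3.13]{zhang}, that $Z^{\chi}(\lambda)$ has a unique simple quotient, so that the submodule generated by the top degenerate vector is automatically the maximal one.
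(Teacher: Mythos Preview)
The paper does not prove this lemma: immediately before the statement it writes ``Let us sketch some main points on $S^{\chi}(\lambda)$, and leave out some details (see \cite[Theorem~3.13]{zhang}),'' and no argument follows. The lemma is imported wholesale from Zhang's classification, with only the bases and relations needed later recorded here.

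Your proposal is a reasonable outline of how such a result is proved directly---locate singular vectors below $v_0$ by analysing when the coefficients in (2.7)--(2.10) vanish, quotient by the submodule they generate, and verify simplicity of the quotient---and this is in the spirit of Zhang's argument. One point in your closing paragraph deserves care: the fact that $Z^{\chi}(\lambda)$ has a unique simple quotient (equivalently, a unique maximal submodule) does \emph{not} by itself guarantee that the submodule generated by your chosen singular vector equals that maximal submodule; it could be strictly smaller. For that you need precisely the ``no further singular vector'' verification you flag, and invoking the dimension count from \cite[Theorem~3.13]{zhang} to close it is circular, since that theorem is the result being reconstructed. A clean way to finish is to show directly that in the quotient every nonzero weight vector can be pushed back up to $v_0$ by repeated application of $e_{12},e_{13},e_{23}$; the same coefficient analysis you have already set up handles this.
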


Note that nonzero $\langle i, j, k\rangle$ is of   weight $(\lambda_1+j-k, \lambda_2+i+k)$ by (\ref{h1}) and (\ref{h2}),
 and the weight vectors in $\frak{g}$ are of  weights $(\mu_1, \mu_2)$, where $\mu_1, \mu_2, \mu_1+\mu_2\in \left\{0, 1, -1\right\}$.
 Therefore, if the weight of nonzero $\langle i, j, k\rangle$ is also one of $\frak{g}$,  then $\langle i, j, k\rangle$ must be one of the following:
\begin{equation}\label{1906211115}
\begin{split}
&\mbox{$\langle 0, 0, \lambda_1-i\rangle$  \;\ with weight \;\ $(i, \lambda_1+\lambda_2-i)$},\\
&\mbox{$\langle 0, 0, i-\lambda_2\rangle$  \;\ with weight \;\ $(\lambda_1+\lambda_2-i, i)$},\\
&\mbox{$\langle 1, 1, \lambda_1+j\rangle$  \;\  with weight \;\ $(1-j, \lambda_1+\lambda_2+j+1)$},\\
&\mbox{$\langle 1, 1, -\lambda_2-j\rangle$ \;\  with weight \;\ $(\lambda_1+\lambda_2+j+1, 1-j)$},\\
&\mbox{$\langle 1, 0, \lambda_1-i\rangle$ and $\langle 0, 1, \lambda_1-i+1\rangle$   \;\ with weight \;\  $(i, \lambda_1+\lambda_2+1-i)$},\\
&\mbox{$\langle 1, 0, -\lambda_2-j\rangle$ and $\langle 0, 1, -\lambda_2-j+1\rangle$ \;\ with weight \;\  $(\lambda_1+\lambda_2+j, 1-j)$},\\
\end{split}
\end{equation}
where $i=0,-1,1, j=0, 1, 2$.
Note that $S^{\chi}(\lambda)$ is the unique  simple quotient module of $Z^{\chi}(\lambda)$. Then we have the following remark, which will be frequently used.

\begin{remark}\label{1904202222}
If $\varphi$ is a weight-map from $\frak{g}$ to $Z^{\chi}(\lambda)$ or $S^{\chi}(\lambda)$,
then  $\varphi$ is of the following forms and we  retain throughout the notations therein:

(1) When $\lambda_1+\lambda_2=1$,
$$\varphi\left(e_{31}\right)= a\langle 0, 0, \lambda_1\rangle,\;
\varphi\left(e_{32}\right)= b\langle 0, 0, \lambda_1-1\rangle, \;
\varphi\left(x\right)= 0,$$
where $x=h_1, h_2, e_{ij}$ with $(i, j)\neq(3, 1), (3, 2)$, and $a, b\in \mathbb{F}$.

(2) When $\lambda_1+\lambda_2=-1$,
\begin{align*}
&\varphi\left(h_1\right)= a_1\langle 1, 0, \lambda_1\rangle+a_2\langle 0, 1, \lambda_1+1\rangle, \\
&\varphi\left(h_2\right)= a_3\langle 1, 0, \lambda_1\rangle+a_4\langle 0, 1, \lambda_1+1\rangle,\\
&\varphi\left(e_{12}\right)= a_5\langle1, 0, \lambda_1-1\rangle+a_6\langle 0, 1, \lambda_1\rangle,\\
&\varphi\left(e_{21}\right)= a_9\langle 1, 0, \lambda_1+1\rangle+a_{10}\langle 0, 1, \lambda_1+2\rangle,\\
&\varphi\left(e_{13}\right)= a_7\langle 0, 0, \lambda_1\rangle, \;\varphi\left(e_{23}\right)= a_8\langle 0, 0, \lambda_1+1\rangle,\\
&\varphi\left(e_{31}\right)= a_{11}\langle 1, 1, \lambda_1+1\rangle, \;\varphi\left(e_{32}\right)= a_{12}\langle 1, 1, \lambda_1\rangle,
\end{align*}
where $a_i\in \mathbb{F}$ for $1\leq i\leq 12$.

(3) When $\lambda_1+\lambda_2=0$,
\begin{align*}
&\varphi\left(e_{31}\right)= a_5\langle 1, 0, \lambda_1\rangle+a_6\langle 0,1,\lambda_1+1\rangle,\\
&\varphi\left(e_{32}\right)= a_7\langle 0, 1, \lambda_1\rangle+a_8\langle 1,0,\lambda_1-1\rangle,\\
&\varphi\left(e_{12}\right)= a_3\langle 0, 0, \lambda_1-1\rangle, \;\varphi\left(e_{21}\right)= a_4\langle 0, 0, \lambda_1+1\rangle,\\
&\varphi\left(h_1\right)= a_1\langle 0, 0, \lambda_1\rangle, \;\varphi\left(h_2\right)= a_2\langle 0, 0, \lambda_1\rangle,\;
\varphi\left(e_{13}\right)= \varphi\left(e_{23}\right)= 0,
\end{align*}
where  $a_i\in \mathbb{F}$ for $1\leq i\leq 8$.

(4)  When $\lambda_1+\lambda_2=-2$,
\begin{align*}
&\varphi\left(e_{23}\right)= a_6\langle 1, 0, \lambda_1+1\rangle+a_7\langle 0, 1, \lambda_1+2\rangle,\\
&\varphi\left(e_{21}\right)= a_8\langle 1, 1, \lambda_1+2\rangle, \;\varphi\left(e_{31}\right)= \varphi\left(e_{32}\right)= 0,\\
&\varphi\left(h_1\right)= a_1\langle 1, 1, \lambda_1+1\rangle, \;\varphi\left(h_2\right)= a_2\langle 1, 1, \lambda_1+1\rangle,\\
&\varphi\left(e_{13}\right)= a_4\langle 1, 0, \lambda_1\rangle+a_5\langle 0, 1, \lambda_1+1\rangle, \; \varphi\left(e_{12}\right)= a_3\langle 1, 1, \lambda_1\rangle,
\end{align*}
where  $a_i\in \mathbb{F}$ for $1\leq i\leq 8$.

(5) When $\lambda_1+\lambda_2=-3$,
$$\varphi\left(e_{13}\right)= a\langle 1, 1, \lambda_1+1\rangle, \;
\varphi\left(e_{23}\right)= b\langle 1, 1, \lambda_1+2\rangle, \;\varphi(x)=0,$$
where $x=h_1, h_2, e_{ij}$ with $(i, j)\neq(1, 3), (2, 3)$, and $a, b\in \mathbb{F}$.

(6) When $\lambda_1+\lambda_2$ is not in $\{0, 1, -1, -2, -3\}$, $\varphi$ is 0.
\end{remark}

\section{First cohomology groups with coefficients in $\chi$-reduced Kac modules}

This section is devoted to the proof of Theorem \ref{1903221616}.

\subsection{ The case of $\chi$ being semi-simple}

In this section  we shall determine $\mathrm{H}^1(Z^{\chi}(\lambda))$ for $\chi$ being semi-simple.

\begin{proposition}\label{1807101113}
Let  $\chi$ be semi-simple, i.e.,
$$\chi\left(h_1\right)= r,\; \chi\left(h_2\right)= s,\;\chi\left(e_{12}\right)= 0,\; \chi\left(e_{21}\right)= 0.$$

(1) If $r\neq 0$ or $s\neq 0$, then $\mathrm{H}^1(Z^{\chi}(\lambda))=0.$

(2) If $r=s=0$ and $\lambda_1+\lambda_2$ is not in $\{-2, -3\}$, then $\mathrm{H}^1(Z^{\chi}(\lambda))=0.$

\end{proposition}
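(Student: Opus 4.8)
The plan is to pass to weight-derivations and solve the resulting finite linear systems case by case in $\lambda_1+\lambda_2$. By Lemma~\ref{lem-weight-der+inner} and the definition (\ref{1906131540}),
$$\mathrm{H}^1(Z^{\chi}(\lambda))\cong \mathrm{Der}(\frak{g},Z^{\chi}(\lambda))_{0}\big/\big(\mathrm{Der}(\frak{g},Z^{\chi}(\lambda))_{0}\cap \mathrm{Ider}(\frak{g},Z^{\chi}(\lambda))\big),$$
so to prove vanishing it suffices to show that every weight-derivation of weight $0$ is inner. Thus the whole problem reduces to understanding $\mathrm{Der}(\frak{g},Z^{\chi}(\lambda))_0$ and the inner weight-derivations.

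First I would exploit the $p$-character constraints to discard most semi-simple $\chi$ outright. By Remark~\ref{1904202222}(6) a nonzero weight-map forces $\lambda_1+\lambda_2\in\{0,1,-1,-2,-3\}\subset\mathbb{F}_p$. Since $\lambda_i^p-\lambda_i=\chi(h_i)^p$, the Frobenius gives $(\lambda_1+\lambda_2)^p-(\lambda_1+\lambda_2)=(r+s)^p$, whence $\lambda_1+\lambda_2\in\mathbb{F}_p$ if and only if $s=-r$. Therefore, whenever $s\neq-r$ — in particular when $r=s\neq0$, or when exactly one of $r,s$ is nonzero — we have $\mathrm{Der}(\frak{g},Z^{\chi}(\lambda))_0=0$ and $\mathrm{H}^1(Z^{\chi}(\lambda))=0$ immediately. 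This settles all of part (1) apart from the single regular class $s=-r\neq0$; and in that class $(\lambda_1-\lambda_2)^p-(\lambda_1-\lambda_2)=(r-s)^p\neq0$, so $\lambda_1-\lambda_2\notin\mathbb{F}_p$, hence $\lambda_1,\lambda_2\notin\mathbb{F}_p$ and the factors $\delta_{k\neq\lambda_1-\lambda_2}$ in (\ref{e21s}), (\ref{e23s}) equal $1$ throughout the range $k=0,\dots,p-1$.

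Next, for the finitely many remaining cases — $\lambda_1+\lambda_2\in\{0,1,-1\}$ under $r=s=0$ (relevant to part (2)), and $\lambda_1+\lambda_2\in\{0,1,-1,-2,-3\}$ under $s=-r\neq0$ (relevant to part (1)) — I would take the parametrized weight-map $\varphi$ from the matching item of Remark~\ref{1904202222} with unknowns $a_i\in\mathbb{F}$ and impose the derivation identity
$$\varphi([x,y])=(-1)^{|\varphi||x|}x\varphi(y)-(-1)^{|y|(|\varphi|+|x|)}y\varphi(x)$$
on a generating set of pairs, for instance $[h_1,e_{ij}]$, $[e_{31},e_{13}]$, $[e_{32},e_{23}]$, $[e_{12},e_{23}]$ and $[e_{13},e_{21}]$, evaluating both sides with (\ref{h1})--(\ref{e23s}). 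Each identity turns into linear relations among the $a_i$, cutting $\mathrm{Der}(\frak{g},Z^{\chi}(\lambda))_0$ down to an explicit subspace. In parallel I would write out the inner weight-derivations $\frak{D}_m$, where $m$ runs over the weight-$(0,0)$ space of $Z^{\chi}(\lambda)$, i.e. over combinations of those $\langle i,j,k\rangle$ with $\lambda_1+j-k=0$ and $\lambda_2+i+k=0$, and compare. In each case the aim is to realize every solution of the derivation equations as some $\frak{D}_m$, or to force it to vanish when no weight-$(0,0)$ vector exists, giving $\mathrm{Der}(\frak{g},Z^{\chi}(\lambda))_0\subseteq\mathrm{Ider}(\frak{g},Z^{\chi}(\lambda))$.

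The hard part will be part (1) at $\lambda_1+\lambda_2\in\{-2,-3\}$. These are precisely the weights carrying the representatives $\lambda=(p-2,0)$ and $\lambda=(p-1,p-2)$ that, by Theorem~\ref{1903221616}, produce nonzero cohomology when $\chi=0$; so the computation cannot be inherited from the $\chi=0$ case but must show that the hypothesis $s=-r\neq0$ destroys the would-be non-inner cocycle. The crux is that here $\lambda_1,\lambda_2\notin\mathbb{F}_p$, so the scalars $\lambda_2+k$, $j+\lambda_1-k$ and $\lambda_1-\lambda_2-k+1$ entering (\ref{e13}), (\ref{e23s}) are invertible for the relevant $k$, and this is exactly what forces the extra parameters $a_i$ in Remark~\ref{1904202222}(4)--(5) to vanish, in contrast to the characteristic-$0$-like behaviour at $\chi=0$. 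Verifying this vanishing cleanly — distinguishing genuine derivation constraints from the $\chi=0$ phantom cocycle, while keeping track of which $\langle i,j,k\rangle$ survive in $Z^{\chi}(\lambda)$ — is where the real care is required.
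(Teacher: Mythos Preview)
Your plan for part (2) is correct and coincides with the paper's: reduce to weight-derivations, invoke Remark~\ref{1904202222}(6) to restrict to $\lambda_1+\lambda_2\in\{0,1,-1\}$, and solve the resulting linear systems case by case, exhibiting each solution as some $\frak{D}_m$.

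For part (1), however, you are making the argument much harder than it is, and your diagnosis of the ``hard part'' is off. The paper disposes of part (1) in one sentence: if $r\neq0$ then $\lambda_1^p-\lambda_1=r^p\neq0$ so $\lambda_1\notin\mathbb{F}_p$, and similarly $s\neq0$ forces $\lambda_2\notin\mathbb{F}_p$; but every weight of $Z^{\chi}(\lambda)$ is $(\lambda_1+j-k,\lambda_2+i+k)$ with $i,j,k\in\mathbb{F}_p$, so neither component can land in $\{-1,0,1\}\subset\mathbb{F}_p$. Hence $Z^{\chi}(\lambda)$ and $\frak{g}$ share no weight, $\mathrm{Der}(\frak{g},Z^{\chi}(\lambda))_0=0$, and $\mathrm{H}^1=0$. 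No case split on $s=-r$ versus $s\neq-r$ is needed.

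You actually arrive at $\lambda_1,\lambda_2\notin\mathbb{F}_p$ in your residual case $s=-r\neq0$, but then fail to notice that this already finishes the job. The parametrizations in Remark~\ref{1904202222} involve vectors such as $\langle 1,1,\lambda_1+1\rangle$, and by the paper's convention the third slot must be (the representative in $\{0,\dots,p-1\}$ of) an element of $\mathbb{F}_p$; when $\lambda_1\notin\mathbb{F}_p$ there is no such $k$, so every target vector is zero and $\varphi\equiv0$ automatically. Your proposed mechanism---that the scalars $\lambda_2+k$, $\lambda_1-k+j$, $\lambda_1-\lambda_2-k+1$ become invertible and thereby kill the would-be cocycle of Theorem~\ref{1903221616}---never enters: there is no cocycle to kill because the relevant weight spaces of $Z^{\chi}(\lambda)$ are empty. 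So there is no ``hard part'' at $\lambda_1+\lambda_2\in\{-2,-3\}$ for $\chi\neq0$; the case analysis you outline there would be vacuous.
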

\begin{proof}
(1) If $\chi$ is semi-simple and  $r\neq 0$ or $s\neq 0$, then
$\lambda_1^p-\lambda_1=r^p\neq 0$ or $ \lambda_2^p-\lambda_2=s^p\neq 0$.
Hence  either $\lambda_1$ or $\lambda_2$ is not in $\mathbb{F}_p$.
Then by (\ref{1906211115}),   $Z^{\chi}(\lambda)$ and $\frak{g}$  have no common weight.
It follows that $\mathrm{H}^1(Z^{\chi}(\lambda))=0$ by  Lemma \ref{lem-weight-der+inner} and (\ref{1906131540}).

(2) If $r=s=0$, then $\lambda_1$ and $\lambda_2$ are in $\mathbb{F}_p$.
By Remark \ref{1904202222}, we only prove $\mathrm{H}^1(Z^{0}(\lambda))=0$ if $\lambda_1+\lambda_2=1,0,-1$.

When $\lambda_1+\lambda_2=-1$,
by Lemma \ref{lem-weight-der+inner}, we only consider the weight-derivation $\varphi$ satisfying Remark \ref{1904202222} (2).
Then $\varphi$ is odd.

\textit{Case  1: $\lambda_1= 0, 1, \ldots, (p-3)/2$.}
Then $\langle i, j, \lambda_1\rangle, \langle i, j, \lambda_1+1\rangle, \langle i, j, \lambda_1+2\rangle$
 are nonzero for $i, j=0$ or $1$. Especially, $\langle i, j, \lambda_1-1\rangle\neq 0$  for $\lambda_1=  1, \ldots, (p-3)/2$ but
 $\langle i, j, \lambda_1-1\rangle=0$ for $\lambda_1= 0$.
Since $\varphi$ is a derivation,
  we get
\begin{align*}
0=e_{21}\varphi(h_1)
&=a_{1}e_{21}\langle1,0,\lambda_1\rangle+a_2e_{21}\langle0,1,\lambda_1+1\rangle\\
&\stackrel{(\ref{e21s})}{=}(a_1-a_2)\langle1,0,\lambda_1+1\rangle+a_2\langle0,1,\lambda_1+2\rangle,\\
0=e_{21}\varphi(h_2)
&=a_{3}e_{21}\langle1,0,\lambda_1\rangle+a_4e_{21}\langle0,1,\lambda_1+1\rangle\\
&\stackrel{(\ref{e21s})}{=}(a_3-a_4)\langle1,0,\lambda_1+1\rangle+a_4\langle0,1,\lambda_1+2\rangle,\\
-a_{12}\langle1,1,\lambda_1\rangle&=\varphi\left([e_{12}, e_{31}]\right)\\
&=a_{11}e_{12}\langle1,1,\lambda_1+1\rangle+a_6e_{31}\langle0,1,\lambda_1\rangle\\
&\stackrel{(\ref{e31})(\ref{e12})}{=}\left(a_{11}(\lambda_1+1)^2+a_6\right)\langle1,1,\lambda_1\rangle,\\
a_7\langle0,0,\lambda_1\rangle&=\varphi\left([e_{12}, e_{23}]\right)\\
&=a_8e_{12}\langle0,0,\lambda_1+1\rangle+a_5e_{23}\langle1,0,\lambda_1-1\rangle+a_6e_{23}\langle0,1,\lambda_1\rangle\\
&\stackrel{(\ref{e12})(\ref{e23s})}{=}\left(a_{8}(\lambda_1+1)^2+a_5-a_6\right)\langle0,0,\lambda_1\rangle,\\
a_3\langle1,0,\lambda_1\rangle+a_4\langle0,1,\lambda_1+1\rangle&=\varphi\left([e_{23}, e_{32}]\right)\\
&=-a_{12}e_{23}\langle1,1,\lambda_1\rangle-a_8e_{32}\langle0,0,\lambda_1+1\rangle\\
&\stackrel{(\ref{e23s})(\ref{e32})}{=}(-a_{12}-a_8)\langle0,1,\lambda_1+1\rangle,\\
a_1\langle1,0,\lambda_1\rangle+a_2\langle0,1,\lambda_1+1\rangle&=\varphi\left([e_{13}, e_{31}]\right)\\
&=-a_{11}e_{13}\langle1,1,\lambda_1+1\rangle-a_7e_{31}\langle0,0,\lambda_1\rangle\\
&\stackrel{(\ref{e13})(\ref{e31})}{=}\left(a_{11}(\lambda_1+1)^2-a_7\right)\langle1,0,\lambda_1\rangle,\\
a_9\langle1,0,\lambda_1+1\rangle+a_{10}\langle0,1,\lambda_1+2\rangle&=\varphi\left([e_{23},e_{31}]\right)\\
&=-a_{11}e_{23}\langle1,1,\lambda_1+1\rangle-a_8e_{31}\langle0,0,\lambda_1+1\rangle\\
&\stackrel{(\ref{e23s})(\ref{e31})}{=}(a_{11}-a_8)\langle1,0,\lambda_1+1\rangle-a_{11}\langle0,1,\lambda_1+2\rangle.
\end{align*}
It follows that $a_i=0$ for $i=1, \ldots, 4$ and
\begin{align*}
 a_5&=-a_8\lambda_1(\lambda_1+2),  &  a_9&=-a_8-a_{10},  \\
a_6&=a_8+a_{10}(\lambda_1+1)^2,     &  a_{11}&=-a_{10},\\
a_7&=-a_{10}(\lambda_1+1)^2, &  a_{12}&=-a_8.
\end{align*}
Then we have $\varphi=a_8\frak{D}_{-\langle 1, 0, \lambda_1\rangle}+a_{10}\frak{D}_{\langle 0, 1, \lambda_1+1\rangle}$,
which is inner.
Then $\mathrm{H}^1(Z^{0}(\lambda))=0$ by Lemma \ref{lem-weight-der+inner} and (\ref{1906131540}).

 \textit{Case 2: $\lambda_1=p-1$.}
 Then $\langle i, j, \lambda_1\rangle, \langle i, j, \lambda_1+1\rangle$ and $\langle i, j, \lambda_1+2\rangle$
  are nonzero for $i, j=0$ or $1$.
Similarly to Case 1, by the definition of derivations and equations (\ref{h1})--(\ref{e23s}),
 those $a_i$  in Remark \ref{1904202222} (2) satisfy:
\begin{align*}
&a_7=0 & &\text{(by letting $x=e_{13}, y=e_{31}$),}\\
&a_8=0 & &\text{(by letting $x=e_{23}, y=e_{32}$),}\\
&a_1=a_2=0     &  &\text{(by letting $x=h_1, y=e_{31}$ or $e_{32}$),}\\
&a_3=a_4=0     &  &\text{(by letting $x=h_2, y=e_{31}$ or $e_{32}$),}\\
&a_9=a_{11}, a_{10}+a_{11}=0  &  &\text{(by letting $x=e_{21}, y=e_{31}$ or $e_{32}$),}\\
&a_6+a_{12}=0, a_{5}+a_{12}=0 &  &\text{(by letting $x=e_{12}, y=e_{31}$ or $e_{32}$).}
\end{align*}
It follows that $a_i=0$ for $i=1, \ldots, 4, 7, 8$ and
$$a_9=-a_{10}=a_{11},\; a_5=-a_{12}=a_{6}.$$
Then we have  $\varphi=a_6\frak{D}_{-\langle 1, 0, p-1\rangle}+a_{11}\frak{D}_{-\langle 0, 1, 0\rangle}$, which is inner.
Hence $\mathrm{H}^1(Z^{0}(\lambda))=0$ by Lemma \ref{lem-weight-der+inner} and (\ref{1906131540}).

\textit{Case 3: $\lambda_1=p-2$.}
 Then $$\langle i,j,\lambda_1\rangle=\langle i,j,\lambda_1+1\rangle=0, \;\langle i,j,\lambda_1+2\rangle\neq0, \;\langle i,j,\lambda_1-1\rangle\neq0,$$
where  $i,j=0$ or $1$.
Hence  it suffices to compute $a_5$ and $a_{10}$ in Remark \ref{1904202222} (2). Note that
$$a_5\langle 1,0,p-3\rangle=\varphi(e_{12})=\varphi([e_{13}, e_{32}])=0, $$
$$a_{10}\langle 0,1,0\rangle=\varphi(e_{21})=\varphi([e_{23}, e_{31}])=0,$$
which implies $a_5=a_{10}=0$, i.e., $\varphi=0$.
Hence $\mathrm{H}^1(Z^{0}(\lambda))=0$ by Lemma \ref{lem-weight-der+inner} and (\ref{1906131540}).

\textit{Case 4:  $\lambda_1=(p-1)/2, (p+1)/2, \ldots, p-3$.}
Then
$$\langle i, j, \lambda_1\rangle=\langle i, j, \lambda_1+1\rangle=\langle i, j, \lambda_1+2\rangle=\langle i, j, \lambda_1-1\rangle=0,$$
where $i, j=0$ or 1. That is, any weight-map satisfying Remark \ref{1904202222} (2) is 0.
Hence $\mathrm{H}^1(Z^{0}(\lambda))=0$ by Lemma \ref{lem-weight-der+inner} and (\ref{1906131540}).

Other cases of $\lambda_1+\lambda_2=0, 1$ can be treated similarly, which are omitted.
\end{proof}

Define the linear maps $\psi_k$ from $ \frak{g}$ to $Z^{\chi}(\lambda)$ as follows:
$$\psi_1\left(e_{13}\right)= \langle 1, 1, 0\rangle, \;\psi_1\left(e_{23}\right)= \langle 1, 1, 1\rangle,\; \psi_1\left(x\right)= 0,$$
where $x=h_1, h_2, e_{ij}$ with $(i, j)\neq(1, 3), (2, 3)$;
$$\psi_2\left(e_{23}\right)= \langle 0, 1, 0\rangle,\; \psi_2\left(e_{21}\right)= \langle 1, 1, 0\rangle,\; \psi_2\left(x\right)= 0,$$
where $x=h_1, h_2, e_{ij}$ with $(i, j)\neq(2,3), (2,1)$.
\begin{proposition}\label{1807101114}
(1) Let  $\chi=0$ and $\lambda_1+\lambda_2=-3$. Then
 $$\mathrm{H}^1(Z^{\chi}(\lambda))=\left\{\begin{array}{ll}
0,\;\;&\mbox{if}\;\;\lambda_1\neq p-1\\
\mathbb{F}\psi_1,\;\;&\mbox{if}\;\;\lambda_1=p-1.
\end{array}\right.$$
In particular,
$$\dim\mathrm{H}^1(Z^{\chi}(\lambda))=\left\{\begin{array}{ll}
0,\;\;&\mbox{if}\;\;\lambda_1\neq p-1\\
1,\;\;&\mbox{if}\;\;\lambda_1=p-1.
\end{array}\right.$$

(2) Let  $\chi=0$ and $\lambda_1+\lambda_2=-2$. Then
 $$\mathrm{H}^1(Z^{\chi}(\lambda))=\left\{\begin{array}{ll}
0,\;\;&\mbox{if}\;\;\lambda_1\neq p-2\\
\mathbb{F}\psi_2,\;\;&\mbox{if}\;\;\lambda_1=p-2.
\end{array}\right.$$
In particular,
$$\dim\mathrm{H}^1(Z^{\chi}(\lambda))=\left\{\begin{array}{ll}
0,\;\;&\mbox{if}\;\;\lambda_1\neq p-2\\
1,\;\;&\mbox{if}\;\;\lambda_1=p-2.
\end{array}\right.$$
\end{proposition}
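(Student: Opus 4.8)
The plan is to treat both parts by the same mechanism: compute the space of weight-zero cocycles and the space of weight-zero inner derivations separately, then take their quotient. By Lemma \ref{lem-weight-der+inner} together with (\ref{1906131540}), it suffices to analyze weight-derivations of weight $0$ modulo the weight-zero inner derivations, and by Remark \ref{1904202222} every such weight-map is one of the explicit forms in cases (4) and (5) therein (corresponding to $\lambda_1+\lambda_2=-2$ and $-3$). Thus the whole problem reduces to a finite linear-algebra computation in the scalar parameters $a,b$ (case (5)) or $a_1,\dots,a_8$ (case (4)).

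First I would pin down the weight-zero inner derivations. An inner derivation $\frak{D}_m$ lies in $\mathrm{Der}(\frak{g},Z^{\chi}(\lambda))_0$ precisely when $m$ is a weight-zero vector, so I must locate $Z^{\chi}(\lambda)_0$. Since the weight of $\langle i,j,k\rangle$ is $(\lambda_1+j-k,\lambda_2+i+k)$ by (\ref{h1}) and (\ref{h2}), this vanishes only if $\lambda_1+\lambda_2+i+j=0$. For $\lambda_1+\lambda_2=-3$ this forces $i+j=3$, which is impossible, so $Z^{\chi}(\lambda)_0=0$; hence in part (1) there are no nonzero weight-zero inner derivations and $\mathrm{H}^1(Z^{\chi}(\lambda))\cong\mathrm{Der}(\frak{g},Z^{\chi}(\lambda))_0$ outright. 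For $\lambda_1+\lambda_2=-2$ the only candidate is $\langle 1,1,\lambda_1+1\rangle$; whether it is nonzero depends on whether $\lambda_1+1$ lies in the admissible range of the index $k$, and this is exactly where the value $\lambda_1=p-2$ enters.

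Next I would impose the cocycle identity on all pairs of the standard generators $h_1,h_2,e_{12},e_{21},e_{13},e_{23},e_{31},e_{32}$, using the module formulas (\ref{h1})--(\ref{e23s}). Because $\varphi$ is supported on only a few generators, the nontrivial equations come from brackets that either land on a generator where $\varphi$ is nonzero or are produced from such generators; the decisive ones are $[e_{12},e_{23}]=e_{13}$, $[e_{21},e_{13}]=e_{23}$, the super-bracket $[e_{13},e_{23}]=0$ (case (5)), and, in case (4), the brackets among $h_i,e_{12},e_{21},e_{13},e_{23}$ that feed the vectors $\langle 1,1,\ast\rangle$. Each pair yields one scalar relation among the parameters, and solving the resulting system determines $\mathrm{Der}(\frak{g},Z^{\chi}(\lambda))_0$. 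I expect that for generic $\lambda_1$ the relations force every parameter to vanish, or to be proportional to the inner derivation already accounted for; while at the distinguished value---$\lambda_1=p-1$ in part (1) and $\lambda_1=p-2$ in part (2)---the modular reductions of the coefficients appearing in (\ref{h1})--(\ref{e23s}) make the linear system degenerate, leaving a one-dimensional solution space spanned by $\psi_1$, respectively $\psi_2$.

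Finally I would assemble the answer as $\dim\mathrm{H}^1=\dim\mathrm{Der}(\frak{g},Z^{\chi}(\lambda))_0-\dim\mathrm{Ider}(\frak{g},Z^{\chi}(\lambda))_0$ and verify directly that the surviving generator is not inner: automatic in part (1) since there are no weight-zero inner derivations at all, and in part (2) because the candidate weight-zero vector $\langle 1,1,\lambda_1+1\rangle$ vanishes exactly when $\lambda_1=p-2$. The hard part will not be any single bracket evaluation but the bookkeeping of which basis vectors $\langle i,j,k\rangle$ are nonzero: the shape of $Z^{\chi}(\lambda)$---and in particular the admissible range of $k$---depends on $\lambda_1-\lambda_2\pmod p$, so the vanishing pattern, and with it both the set of nontrivial cocycle equations and the existence of the relevant inner derivation, must be tracked separately over the subintervals of values of $\lambda_1$. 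The values $\lambda_1=p-1$ and $\lambda_1=p-2$ are precisely the thresholds at which this vanishing pattern changes and a non-inner cocycle appears.
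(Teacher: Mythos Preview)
Your plan is correct and follows essentially the same route as the paper: reduce to weight-zero derivations via Lemma \ref{lem-weight-der+inner}, use the explicit forms in Remark \ref{1904202222} (4) and (5), and solve the resulting linear system in the parameters by case analysis on $\lambda_1$ according to which basis vectors $\langle i,j,k\rangle$ survive. Your preliminary identification of $Z^{\chi}(\lambda)_0$ (empty for $\lambda_1+\lambda_2=-3$, and spanned by $\langle 1,1,\lambda_1+1\rangle$ for $\lambda_1+\lambda_2=-2$) is a clean way to handle non-innerness---the paper instead verifies directly that $\psi_1$ is not inner by computing the $e_{13}$-action---but one caution: the vector $\langle 1,1,\lambda_1+1\rangle$ actually vanishes on a whole interval of $\lambda_1$ values, not only at $\lambda_1=p-2$, so in part (2) you will still need the full case split to see that on that interval the weight-zero cocycle space itself is zero.
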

\begin{proof}
 When $\chi=0$ and $\lambda_1+\lambda_2=-3$,
 by Lemma \ref{lem-weight-der+inner}, we only consider the weight-derivation $\varphi$ satisfying Remark \ref{1904202222} (5).
Then $\varphi$ is odd.

\textit{Case 1:  $\lambda_1=0, 1, \ldots, (p-5)/2$ or $\lambda_1=p-2$.}
Then $\langle i, j, \lambda_1\rangle, \langle i, j, \lambda_1+1\rangle$ and $\langle i, j, \lambda_1+2\rangle$ are nonzero for $i,j=0$ or $1$.
Hence we have
 \begin{align*}
 0=\varphi([e_{13}, e_{23}])&=-be_{13}\langle1,1,\lambda_1+2\rangle-ae_{23}\langle1,1,\lambda_1+1\rangle\\
 &\stackrel{(\ref{e13})(\ref{e23s})}{=}(a-b)\langle 0, 1, \lambda_1+2\rangle+\left(a-(\lambda_1+2)^2b\right)\langle 1, 0, \lambda_1+1\rangle,\\
0=\varphi([e_{12}, e_{13}])&=ae_{12}\langle1,1,\lambda_1+1\rangle\\
&\stackrel{(\ref{e12})}{=}a(\lambda_1+1)(\lambda_1+3)\langle 1, 1, \lambda_1\rangle.
\end{align*}
 It implies that $a=b=b(\lambda_1+2)^2$ and $a(\lambda_1+1)(\lambda_1+3)=0.$
Then $a=b=0$, since $\lambda_1$ is not  in $\{p-1, p-3\}$,
 i.e., $\varphi=0$. By Lemma \ref{lem-weight-der+inner} and (\ref{1906131540}), $\mathrm{H}^1\left(Z^{0}(M)\right)=0$.

\textit{Case 2: $\lambda_1=(p-3)/2, (p-1)/2, \ldots, p-3$.}
Then $\langle 1, 1, \lambda_1+1\rangle=\langle 1, 1, \lambda_1+2\rangle=0$.
 We have  $\varphi=0$ satisfying  Remark \ref{1904202222} (5).
 Hence by Lemma \ref{lem-weight-der+inner} and (\ref{1906131540}), $\mathrm{H}^1\left(Z^{0}(M)\right)=0$.

\textit{Case 3: $\lambda_1=p-1$.}
Then
$\langle i, j, \lambda_1+1\rangle, \langle i, j, \lambda_1+2\rangle$  are nonzero  for $i,j=0$ or 1.
On one hand, by the definition of derivations we have
 $$0=\varphi([e_{13}, e_{23}])\stackrel{(\ref{e13})(\ref{e23s})}{=}(a-b)\langle 0, 1, 1\rangle+(a-b)\langle 1, 0, 0\rangle,$$
 i.e., $a=b.$
 On the other hand,  $\psi_1$ is  an odd weight-derivation if $\chi=0$ and $\lambda=(p-1, p-2)$.
Then  $\varphi=a\psi_1$ for $a\in \mathbb{F}$.
By   (\ref{e13}), we get
\begin{eqnarray*}
&&e_{13}.\langle 0,0,k\rangle=0, \\
&&e_{13}.\langle 1,0,k\rangle=(\lambda_1-k)\langle 0,0,k\rangle,\\
&&e_{13}.\langle 0,1,k\rangle=k(\lambda_1-\lambda_2-k+1)\langle 0,0,k-1\rangle,\\
&&e_{13}.\langle 1,1,k\rangle=(\lambda_1-k+1)\langle0,1,k\rangle-k(\lambda_1-\lambda_2-k+1)\langle 1,0,k-1\rangle.
\end{eqnarray*}
So $\psi_1$ is not inner. Then by Lemma \ref{lem-weight-der+inner} and (\ref{1906131540}),  we have $\dim\mathrm{H}^1(Z^{0}(\lambda))=1$.

The case when $\chi=0$ and $\lambda_1+\lambda_2=-2$ can be treated similarly, which is omitted.
\end{proof}

By Propositions \ref{1807101113} and  \ref{1807101114}, we get the following theorem.
\begin{theorem}\label{1903231619}
Let  $\chi$ be semi-simple. Then
$$\mathrm{H}^1(Z^{\chi}(\lambda))=\left\{\begin{array}{ll}
\mathbb{F}\psi_1,\;\;&\mbox{if}\;\;\chi=0,\;\;\mbox{and}\;\;\lambda=(p-1, p-2)\\
\mathbb{F}\psi_2,\;\;&\mbox{if}\;\;\chi=0,\;\;\mbox{and}\;\;\lambda=(p-2, 0)\\
0,\;\;&\mbox{otherwise}.
\end{array}\right.$$
In particular,
$$\dim\mathrm{H}^1(Z^{\chi}(\lambda))=\left\{\begin{array}{ll}
1,\;\;&\mbox{if}\;\;\chi=0,\;\;\mbox{and}\;\;\lambda=(p-1, p-2)\;\;\mbox{or}\;\;(p-2, 0)\\
0,\;\;&\mbox{otherwise}.
\end{array}\right.$$
\end{theorem}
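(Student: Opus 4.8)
The plan is to assemble Theorem \ref{1903231619} directly from Propositions \ref{1807101113} and \ref{1807101114}, which between them already cover every semi-simple $\chi$ and every highest weight $\lambda$. No new module-theoretic computation is needed; the entire content of the argument is a clean case partition together with an arithmetic translation of the numerical conditions on $\lambda_1+\lambda_2$ and $\lambda_1$ into the stated conditions on the pair $\lambda=(\lambda_1,\lambda_2)$.

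First I would dispose of the vanishing cases. If $\chi$ is semi-simple with $r\neq 0$ or $s\neq 0$, then Proposition \ref{1807101113}(1) gives $\mathrm{H}^1(Z^{\chi}(\lambda))=0$ immediately, so I may henceforth assume $r=s=0$; for semi-simple characters this means exactly $\chi=0$, whence $\lambda_1,\lambda_2\in\mathbb{F}_p$ and each $\lambda_i$ may be identified with its representative in $\{0,1,\dots,p-1\}$. Proposition \ref{1807101113}(2) then kills all remaining $\lambda$ with $\lambda_1+\lambda_2\notin\{-2,-3\}$, leaving only the two residual strips $\lambda_1+\lambda_2=-3$ and $\lambda_1+\lambda_2=-2$. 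On the first strip Proposition \ref{1807101114}(1) says the cohomology is $\mathbb{F}\psi_1$ when $\lambda_1=p-1$ and $0$ otherwise; combining $\lambda_1=p-1$ with $\lambda_1+\lambda_2=-3$ forces $\lambda_2\equiv -3-(p-1)\equiv p-2\pmod{p}$, i.e. $\lambda=(p-1,p-2)$. Symmetrically, on the second strip Proposition \ref{1807101114}(2) gives $\mathbb{F}\psi_2$ precisely when $\lambda_1=p-2$, and $\lambda_1=p-2$ with $\lambda_1+\lambda_2=-2$ forces $\lambda_2\equiv 0\pmod{p}$, i.e. $\lambda=(p-2,0)$. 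Collating these four outcomes produces the displayed formula for $\mathrm{H}^1(Z^{\chi}(\lambda))$, and since $\mathbb{F}\psi_1$ and $\mathbb{F}\psi_2$ are each one-dimensional the dimension statement is read off at once.

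There is no genuine obstacle here beyond careful bookkeeping, since every hard computation has already been carried out in the two propositions. The two points I would state explicitly are, first, that the four conditions ($r\neq 0$ or $s\neq 0$; $r=s=0$ with $\lambda_1+\lambda_2\notin\{-2,-3\}$; and the two residual strips) are mutually disjoint and jointly exhaust all semi-simple $\chi$ and all $\lambda$; and second, that the arithmetic reductions of the strip conditions are performed modulo $p$, using $\lambda_1,\lambda_2\in\mathbb{F}_p$. It is also worth remarking that $\chi=0$ is the unique semi-simple character with $r=s=0$, so the two nonvanishing cases are correctly attributed to $\chi=0$ rather than to some other representative in its coadjoint orbit.
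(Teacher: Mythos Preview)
Your proposal is correct and follows exactly the paper's approach: the paper's proof of Theorem \ref{1903231619} consists of the single sentence ``By Propositions \ref{1807101113} and \ref{1807101114}, we get the following theorem,'' and your write-up simply makes the implicit case partition and the arithmetic identification of $(\lambda_1,\lambda_2)$ explicit. There is nothing to add or correct.
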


\subsection{ The case of $\chi$ being nilpotent}
In this section  we shall determine $\mathrm{H}^1(Z^{\chi}(\lambda))$ for $\chi$ being nilpotent.
\begin{theorem}\label{1807101119}
Let  $\chi$ be nilpotent, i.e., $$\chi\left(h_1\right)=\chi\left(h_2\right)= r, \chi\left(e_{12}\right)= 0, \chi\left(e_{21}\right)= 1.$$
Then  $\mathrm{H}^1(Z^{\chi}(\lambda))=0.$
\end{theorem}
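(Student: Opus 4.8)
The plan is to mimic the semi-simple analysis of Propositions \ref{1807101113} and \ref{1807101114}. By Lemma \ref{lem-weight-der+inner} every derivation is a weight-derivation modulo an inner one, so it suffices to show that each weight-derivation $\varphi\in\mathrm{Der}(\frak{g},Z^{\chi}(\lambda))_0$ is inner, after which $\mathrm{H}^1(Z^{\chi}(\lambda))=0$ follows from (\ref{1906131540}). I would split the argument according to whether the parameter $r$ of the nilpotent $\chi$ vanishes.

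First I would dispose of the case $r\neq 0$. From $\lambda_i^p-\lambda_i=\chi(h_i)^p=r^p\neq 0$ one sees that $\lambda_1,\lambda_2\notin\mathbb{F}_p$, so every weight $(\lambda_1+j-k,\lambda_2+i+k)$ of $Z^{\chi}(\lambda)$ has an entry outside $\mathbb{F}_p$, whereas the weights of $\frak{g}$ have all entries in $\{0,1,-1\}$. By (\ref{1906211115}) the two modules share no weight, hence $\mathrm{Der}(\frak{g},Z^{\chi}(\lambda))_0=0$ and the cohomology vanishes, exactly as in Proposition \ref{1807101113}(1).

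The substantial case is $r=0$, where $\lambda_1,\lambda_2\in\mathbb{F}_p$ and $\chi(h_1)=\chi(h_2)=\chi(e_{12})=0$, $\chi(e_{21})=1$. Here I would invoke Remark \ref{1904202222}: a nonzero weight-derivation exists only when $\lambda_1+\lambda_2\in\{0,1,-1,-2,-3\}$, and I would run through these five values one at a time, writing $\varphi$ in the corresponding normal form and imposing the derivation identity $\varphi([x,y])=(-1)^{|\varphi||x|}x\varphi(y)-(-1)^{|y|(|\varphi|+|x|)}y\varphi(x)$ on suitable pairs of basis elements. The computations are formally parallel to those in Propositions \ref{1807101113}(2) and \ref{1807101114}, but now the action of $e_{21}$ and $e_{23}$ must be read off from the nilpotent formulas (\ref{e21n}) and (\ref{e23n}) in place of (\ref{e21s}) and (\ref{e23s}), and the module carries the \emph{full} basis $\{\langle i,j,k\rangle : i,j\in\{0,1\},\ 0\le k\le p-1\}$ with the cyclic convention $\langle i,j,k+p\rangle=\langle i,j,k\rangle$. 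For $\lambda_1+\lambda_2=0,\pm1$ one finds, as in the semi-simple cases, that the solved relations express $\varphi$ as a combination of inner derivations $\frak{D}_m$ built from the weight-zero vectors available in these degrees (for instance $\langle 1,0,\lambda_1\rangle$ and $\langle 0,1,\lambda_1+1\rangle$ when $\lambda_1+\lambda_2=-1$), so these degrees contribute nothing.

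The hard part, and the point where the nilpotent case genuinely departs from the semi-simple one, is $\lambda_1+\lambda_2=-2$ and $-3$: these are precisely the degrees that produced the nonzero classes $\psi_2$ and $\psi_1$ for semi-simple $\chi$. There the class survived because the truncation of the $k$-range made $e_{21}$ annihilate the top vector, leaving the relevant derivation identity vacuous. In the nilpotent setting this truncation disappears: since $e_{21}^p$ acts as the identity, $e_{21}$ is the cyclic shift $\langle i,j,k\rangle\mapsto\langle i,j,k+1\rangle$ (plus the lower-order term of (\ref{e21n})), so every $\langle i,j,k\rangle$ is a nonzero basis vector. Taking $\lambda_1+\lambda_2=-3$ with $\varphi$ as in Remark \ref{1904202222}(5), the identity $\varphi([e_{21},e_{23}])=0$ now reads $b\,\langle 1,1,\lambda_1+3\rangle=0$ with $\langle 1,1,\lambda_1+3\rangle\neq 0$, forcing $b=0$, whereupon $\varphi([e_{21},e_{13}])=\varphi(e_{23})$ forces $a=b=0$; the degree $-2$ analogue of $\psi_2$ is killed the same way. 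Since for $\lambda_1+\lambda_2=-3$ there is moreover no weight-zero odd vector, while for $-2$ the only weight-zero vector $\langle 1,1,\lambda_1+1\rangle$ yields an inner derivation, every weight-derivation is either zero or inner. We therefore conclude $\mathrm{H}^1(Z^{\chi}(\lambda))=0$ by Lemma \ref{lem-weight-der+inner} and (\ref{1906131540}).
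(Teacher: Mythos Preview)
Your approach is essentially the paper's own: reduce to weight-derivations via Lemma \ref{lem-weight-der+inner}, dispose of $r\neq 0$ by the absence of common weights, and then run through the five values $\lambda_1+\lambda_2\in\{0,1,-1,-2,-3\}$ from Remark \ref{1904202222}, using the nilpotent action formulas (\ref{e21n}), (\ref{e23n}) in place of (\ref{e21s}), (\ref{e23s}). The paper carries out only the case $\lambda_1+\lambda_2=1$ in detail and declares the remaining four ``similarly treated''; you instead work out the case $\lambda_1+\lambda_2=-3$ explicitly and sketch $-2$, which is arguably the more instructive choice since these are exactly the degrees where the semi-simple analysis produced the outer derivations $\psi_1,\psi_2$. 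Your observation that the obstruction disappears precisely because $e_{21}$ now acts as the cyclic shift $\langle i,j,k\rangle\mapsto\langle i,j,k+1\rangle$ (so $\langle 1,1,\lambda_1+3\rangle\neq 0$ and the relation $\varphi([e_{21},e_{23}])=0$ is no longer vacuous) is correct and is the real content behind the paper's omitted computations; the remaining cases $\lambda_1+\lambda_2\in\{0,-1\}$ indeed go through with $\varphi$ reducing to $\frak{D}_m$ for the weight-zero vectors you name.
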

\begin{proof}
 If   $r\neq 0$, then $\lambda_1^p-\lambda_1=r^p\neq 0, \lambda_2^p-\lambda_2=r^p\neq 0.$
Hence  both $\lambda_1$ and $\lambda_2$ are not in $\mathbb{F}_p$.
By (\ref{1906211115}),  $Z^{\chi}(\lambda)$ and $\frak{g}$ have no common weight.
It follows that $\mathrm{H}^1(Z^{\chi}(\lambda))=0$ by  Lemma \ref{lem-weight-der+inner} and (\ref{1906131540}).

If $r=0$ and $\lambda_1+\lambda_2$ is not in $ \{0, 1, -1, -2, -3\}$,
then any weight-map is zero by Remark \ref{1904202222}. It follows that $\mathrm{H}^1(Z^{\chi}(\lambda))=0$ by Lemma \ref{lem-weight-der+inner} and (\ref{1906131540}).

If $r=0$ and $\lambda_1+\lambda_2=1$, then by  Lemma \ref{lem-weight-der+inner}, we only consider the weight-derivation $\varphi$
satisfying  Remark \ref{1904202222} (1), which is odd.
Note that
$$0=e_{31}\varphi(e_{31})\stackrel{(\ref{e31})}{=}a\langle1,0,\lambda_1\rangle, \; 0=e_{32}\varphi(e_{32})\stackrel{(\ref{e32})}{=}b\langle0,1,\lambda_1-1\rangle,$$
which implies  $a=b=0$.
Then any weight-derivation satisfying  Remark \ref{1904202222} (1) must be  zero.

The other cases when $r=0$ and $\lambda_1+\lambda_2\in \{0, -1, -2, -3\}$ can be similarly treated, which are omitted.
\end{proof}

Theorem \ref{1903221616} follows from   Theorems \ref{1903231619} and \ref{1807101119}.

\section{First cohomology groups with coefficients in simple $\chi$-modules}

This section is devoted to the proof of Theorem \ref{1903231610}.

\begin{proposition}\label{1903231609}
(1) If $\lambda_1\neq p-1$ and $\lambda_2\neq 0$, then $\mathrm{H}^1(S^{\chi}(\lambda))=0.$

(2) Let $\chi=0$ or $\chi$ be nilpotent with $\chi(h_1)=0$. If $\lambda$ is not in the set
$$\left\{(p-1, 1), (p-1, 2), (p-1, 0), (p-1, p-2), (p-1, p-1), (0, 0), (1, 0), (p-2, 0), (p-3, 0)\right\},$$
then $\mathrm{H}^1(S^{\chi}(\lambda))=0.$

(3) If $\chi$ is semi-simple but nonzero, or $\chi$ is nilpotent with $\chi(h_1)\neq 0$, then $\mathrm{H}^1(S^{\chi}(\lambda))=0.$
\end{proposition}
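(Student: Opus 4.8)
The plan is to reduce everything to weight-derivations by Lemma~\ref{lem-weight-der+inner} and then to exploit two facts already available: that $S^{\chi}(\lambda)$ is a quotient of $Z^{\chi}(\lambda)$, so its weights form a subset of those of the Kac module, and that the shape of any weight-map into $S^{\chi}(\lambda)$ is pinned down by Remark~\ref{1904202222} together with the structural description in Lemma~\ref{2002081211}. Since $\mathrm{H}^1=\mathrm{Der}/\mathrm{Ider}$ and $\mathrm{Der}=\mathrm{Der}_0+\mathrm{Ider}$, to prove vanishing it suffices in each case to show that every weight-derivation $\frak{g}\to S^{\chi}(\lambda)$ is inner; in particular, whenever $\frak{g}$ and $S^{\chi}(\lambda)$ share no common weight, the space $\mathrm{Der}(\frak{g},S^{\chi}(\lambda))_0$ already vanishes and we are done.

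For part (1), I would note that the hypotheses $\lambda_1\neq p-1$ and $\lambda_2\neq 0$ give $S^{\chi}(\lambda)=Z^{\chi}(\lambda)$ for every $\chi$ by Lemma~\ref{2002081211}(1), so $\mathrm{H}^1(S^{\chi}(\lambda))=\mathrm{H}^1(Z^{\chi}(\lambda))$. By Theorem~\ref{1903221616} the latter is nonzero only for $\chi=0$ with $\lambda=(p-1,p-2)$ or $(p-2,0)$; but the first of these has $\lambda_1=p-1$ and the second has $\lambda_2=0$, so both are excluded by hypothesis, giving $\mathrm{H}^1(S^{\chi}(\lambda))=0$.

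For part (3), I would argue purely by weights: nonzero semi-simple $\chi$ forces $\lambda_1\notin\mathbb{F}_p$ (when $r\neq 0$) or $\lambda_2\notin\mathbb{F}_p$ (when $s\neq 0$), and nilpotent $\chi$ with $\chi(h_1)\neq 0$ forces both $\lambda_1,\lambda_2\notin\mathbb{F}_p$, exactly the situation handled in Proposition~\ref{1807101113}(1) and the opening of Theorem~\ref{1807101119}. Since every weight of $\langle i,j,k\rangle$ has components $\lambda_1+j-k$ and $\lambda_2+i+k$ while every weight of $\frak{g}$ has both components in $\{-1,0,1\}\subset\mathbb{F}_p$, the relevant component can never lie in $\mathbb{F}_p$, so by (\ref{1906211115}) the module $Z^{\chi}(\lambda)$, and hence its quotient $S^{\chi}(\lambda)$, shares no weight with $\frak{g}$. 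Thus $\mathrm{Der}(\frak{g},S^{\chi}(\lambda))_0=0$ and $\mathrm{H}^1(S^{\chi}(\lambda))=0$ by Lemma~\ref{lem-weight-der+inner} and (\ref{1906131540}).

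The heart of the matter is part (2), where $\chi=0$ or $\chi$ is nilpotent with $\chi(h_1)=0$, so $\lambda_1,\lambda_2\in\mathbb{F}_p$. I would split on whether $S^{\chi}(\lambda)=Z^{\chi}(\lambda)$: if $\lambda_1\neq p-1$ and $\lambda_2\neq 0$ this holds and part (1) applies, so $\mathrm{H}^1=0$. If instead $\lambda_1=p-1$ or $\lambda_2=0$, the decisive point is that the nine listed exceptional weights are exactly those with $\lambda_1=p-1$ or $\lambda_2=0$ for which $\lambda_1+\lambda_2\in\{0,1,-1,-2,-3\}$: scanning the five admissible values of $\lambda_1+\lambda_2$ with $\lambda_1\equiv -1$ yields $(p-1,1),(p-1,2),(p-1,0),(p-1,p-1),(p-1,p-2)$, and with $\lambda_2=0$ yields $(0,0),(1,0),(p-1,0),(p-2,0),(p-3,0)$, whose union is precisely the stated set. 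Hence if $\lambda$ avoids this set but has $\lambda_1=p-1$ or $\lambda_2=0$, then $\lambda_1+\lambda_2\notin\{0,1,-1,-2,-3\}$, so Remark~\ref{1904202222}(6) makes every weight-map $\frak{g}\to S^{\chi}(\lambda)$ vanish and $\mathrm{H}^1(S^{\chi}(\lambda))=0$. I expect the only real obstacle to be this enumeration: the proposition is essentially a packaging of the Kac-module result, the no-common-weight argument, and Remark~\ref{1904202222}, and its correctness hinges on the modular arithmetic identifying the exceptional set exactly, so that every remaining $\lambda$ falls under either the $S=Z$ reduction or the out-of-range vanishing.
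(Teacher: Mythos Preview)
Your proposal is correct and follows essentially the same approach as the paper: part (1) via $S^{\chi}(\lambda)=Z^{\chi}(\lambda)$ and Theorem~\ref{1903221616}, part (3) via the ``no common weight'' argument from (\ref{1906211115}), and part (2) via the weight-range restriction. Your treatment of (2) is in fact slightly more explicit than the paper's, which invokes (\ref{1906211115}) on the bases from Lemma~\ref{2002081211}(2)--(6) without spelling out that the nine exceptional weights are precisely those with $\lambda_1=p-1$ or $\lambda_2=0$ and $\lambda_1+\lambda_2\in\{0,1,-1,-2,-3\}$, nor that the remaining $\lambda$ (with $\lambda_1\neq p-1$ and $\lambda_2\neq 0$) are covered by part (1); your enumeration and case split make this transparent.
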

\begin{proof}
(1) By Lemma \ref{2002081211} (1), we have $Z^{\chi}(\lambda)=S^{\chi}(\lambda)$  if $\lambda_1\neq p-1$ and $\lambda_2\neq 0$.
Then by Theorem \ref{1903221616}, we get $\mathrm{H}^1(S^{\chi}(\lambda))=0.$

(2) Suppose that $\chi=0$ or $\chi$ is nilpotent with $\chi(h_1)=0$.
Each basis vector of $S^{\chi}(\lambda)$ in Lemma \ref{2002081211} (2)--(6)
is also a weight vector.
Then by (\ref{1906211115}), if $\lambda$ is a common weight of $S^{\chi}(\lambda)$ and $\frak{g}$,
then $\lambda$ must be in the set
$$\left\{(p-1, 1), (p-1, 2), (p-1, 0), (p-1, p-2), (p-1, p-1), (0, 0), (1, 0), (p-2, 0), (p-3, 0)\right\}.$$
Hence by Lemma \ref{lem-weight-der+inner} and (\ref{1906131540}), we get (2).

(3) If $\chi$ is semi-simple but nonzero or $\chi$ is nilpotent with $\chi(h_1)\neq 0$,
then either $\lambda_1$ or $\lambda_2$ is not in $\mathbb{F}_p$.
Then by (\ref{1906211115}),   $S^{\chi}(\lambda)$ and $\frak{g}$ have no common weight.
It follows that $\mathrm{H}^1(S^{\chi}(\lambda))=0$ by  Lemma \ref{lem-weight-der+inner} and (\ref{1906131540}).
\end{proof}

If $\chi$ is semi-simple, then we only need to consider the cases of $\chi=0$ and $\lambda$ being in
$$\left\{(p-1, 1), (p-1, 2), (p-1, 0), (p-1, p-2), (p-1, p-1), (0, 0), (1, 0), (p-2, 0), (p-3, 0)\right\}.$$

Define the linear maps $\psi_k$ from $ \frak{g}$ to $S^{\chi}(\lambda)$ by letting
$$\psi_3\left(e_{13}\right)=-\langle 0, 1, 0\rangle, \;\psi_3\left(e_{23}\right)=\langle 1, 0, 0\rangle, \psi_3\left(x\right)=0,$$
where $x=h_1, h_2, e_{ij}$ with $(i, j)\neq(1, 3), (2, 3);$
\begin{align*}
&\psi_4\left(h _1\right)= -\langle 1, 0, p-1\rangle,\; \psi_4\left(e_{12}\right)= \langle 1, 0, p-2\rangle,\\
&\psi_4\left(e_{13}\right)= \langle 0, 0, p-1\rangle,\; \;\psi_4\left(x\right)= 0,
\end{align*}
where $x=h_2, e_{ij}$ with $(i, j)\neq(1, 2), (1, 3);$
$$\psi_5\left(e_{23}\right) =\langle 0, 0, 0\rangle,\; \psi_5\left(e_{21}\right)= -\langle 1, 0, 0\rangle,\; \psi_5\left(x\right)= 0,$$
where $x=h_1, h_2, e_{ij}$ with $ (i, j)\neq(2, 1), (2, 3)$;
$$\psi_6\left(e_{12}\right)= \langle 0, 0, p-2\rangle,\; \psi_6\left(e_{32}\right)=\langle 1, 0, p-2\rangle, \;\psi_6\left(x\right)= 0,$$
where $x=h_1, h_2, e_{ij}$ with $ (i, j)\neq (1, 2), (3, 2)$;
$$\psi_7\left(e_{21}\right)= \langle 0, 0, 0\rangle,\; \psi_7\left(e_{31}\right)= \langle 0, 1, 0\rangle,\; \psi_7\left(x\right)= 0,$$
where $x=h_1, h_2, e_{ij}$ with $(i, j)\neq (2, 1), (3, 1)$;
$$\psi_8\left(e_{32}\right)= \langle 0, 0, 0\rangle, \;\psi_8\left(e_{31}\right)= -\langle0,0,1\rangle,\; \psi(x)=0$$
where $x=h_1, h_2, e_{ij}$ with $(i,j)\neq (3,1), (3,2).$

\begin{proposition}\label{1903231605}
Let $\chi=0.$ Then

(1)
$$\mathrm{H}^1(S^{\chi}(\lambda))=\left\{\begin{array}{ll}
\mathbb{F}\psi_6+\mathbb{F}\psi_7,\;\;&\mbox{if} \; \lambda=(p-1, 1)\\
\mathbb{F}\psi_4+\mathbb{F}\psi_5,\;\;&\mbox{if} \; \lambda=(p-1, 0).
\end{array}\right.$$
In particular, if $\lambda=(p-1, 1)$ or $(p-1, 0)$, then $\dim\mathrm{H}^1(S^{\chi}(\lambda))=2.$

(2)
$$\mathrm{H}^1(S^{\chi}(\lambda))=\left\{\begin{array}{ll}
\mathbb{F}\psi_{8},\;\;&\mbox{if} \; \lambda=(1, 0)\\
\mathbb{F}\psi_3,\;\;&\mbox{if} \; \lambda=(p-1, p-1).
\end{array}\right.$$
In particular, if $\lambda=(1, 0)$ or $(p-1, p-1)$, then $\dim\mathrm{H}^1(S^{\chi}(\lambda))=1.$

(3) If $\lambda=(p-1, 2), (p-1, p-2), (p-3, 0), (0, 0)$ or $(p-2, 0)$, then $\dim\mathrm{H}^1(S^{\chi}(\lambda))=0.$
\end{proposition}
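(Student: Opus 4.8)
The plan is to treat each of the finitely many weights $\lambda$ separately, reducing in every case to a finite-dimensional linear-algebra problem with the machinery already assembled. Since the proposition assumes $\chi=0$, by Lemma \ref{lem-weight-der+inner} and \eqref{1906131540} it suffices to analyze weight-derivations modulo weight-$0$ inner derivations; writing $M = S^{\chi}(\lambda)$, one has
\[
\mathrm{H}^1(M) \cong \mathrm{Der}(\frak{g}, M)_0 \big/ \bigl(\mathrm{Der}(\frak{g}, M)_0 \cap \mathrm{Ider}(\frak{g}, M)\bigr),
\]
and the intersection consists of the inner derivations $\frak{D}_m$ with $m$ ranging over the weight-$0$ subspace $M_0$, since $\frak{D}_m$ has weight equal to the weight of $m$. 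For each listed $\lambda$ I would first reduce $\lambda_1+\lambda_2$ modulo $p$ to select the corresponding normal form in Remark \ref{1904202222}: e.g.\ $(p-1,1)$ and $(0,0)$ fall under part (3), $(p-1,0)$ under part (2), $(1,0)$ and $(p-1,2)$ under part (1), $(p-1,p-1)$ and $(p-2,0)$ under part (4), and $(p-1,p-2)$ and $(p-3,0)$ under part (5).

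Having fixed the shape of a weight-map $\varphi$ with undetermined coefficients $a_i$, the next step is to prune it using Lemma \ref{2002081211}: in the simple quotient many of the monomials $\langle i,j,k\rangle$ appearing as targets vanish or become dependent, so several $a_i$ drop out immediately. For instance, for $\lambda=(p-1,p-1)$ the part-(4) form collapses so that $\varphi$ is supported only on $e_{13}$ and $e_{23}$, matching $\psi_3$. I would then impose the super-derivation identity on the brackets $[x,y]$ of pairs of standard generators, reading off each side with the action formulas \eqref{h1}--\eqref{e23s}; this yields a homogeneous linear system in the surviving $a_i$ whose solution space is $\mathrm{Der}(\frak{g},M)_0$. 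The brackets of pairs drawn from $\{e_{13},e_{23},e_{31},e_{32},e_{12},e_{21}\}$ are the informative ones, and vanishing brackets such as $[e_{13},e_{23}]=0$ often produce the decisive scalar constraint; for $\psi_3$ this is the requirement $(\lambda_1-\lambda_2)\langle 0,0,0\rangle = 0$, satisfied precisely because $\lambda_1=\lambda_2$ when $\lambda=(p-1,p-1)$.

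To finish each case I would subtract the inner part. Computing $\frak{D}_m$ for $m$ in the (typically very small) weight-$0$ subspace $M_0$ identifies $\mathrm{Der}(\frak{g},M)_0 \cap \mathrm{Ider}(\frak{g},M)$; the quotient is then spanned by the explicitly defined maps $\psi_k$, which are already normalized to serve as representatives. In several cases $M_0 = 0$ — for $\lambda=(p-1,p-1)$ the weight-$0$ subspace of $S^{0}(\lambda)$ is trivial, since its three basis vectors carry weights $(-1,0)$, $(-1,-1)$, $(0,-1)$ — so every nonzero weight-derivation is automatically non-inner and the number of surviving parameters gives the dimension directly. For the weights in part (3) the linear system instead forces every weight-derivation to be inner (or to vanish), giving $\mathrm{H}^1 = 0$.

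I expect the main obstacle to be the two two-dimensional cases $\lambda=(p-1,1)$ and $\lambda=(p-1,0)$: there one must verify that two independent weight-derivations ($\psi_6,\psi_7$ and $\psi_4,\psi_5$ respectively) both satisfy the full derivation identity and that no nonzero linear combination of them is inner. The conceptual subtlety throughout is that passing from the Kac module to its simple quotient simultaneously shrinks the weight-derivation space and the space of inner derivations: the nonzero classes found here at $(p-1,1),(p-1,0),(1,0),(p-1,p-1)$ are genuinely new, since $\mathrm{H}^1(Z^{\chi}(\lambda))$ vanishes at these weights (Theorem \ref{1903221616}), while conversely the nonzero Kac classes at $(p-1,p-2)$ and $(p-2,0)$ die in the quotient, reappearing here in part (3) as $\mathrm{H}^1 = 0$. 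The remaining effort is the careful but routine verification, weight by weight, that each claimed $\psi_k$ is a genuine derivation and that these maps span the corresponding quotient.
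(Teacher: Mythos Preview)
Your proposal is correct and follows essentially the same approach as the paper: reduce to weight-derivations via Lemma~\ref{lem-weight-der+inner}, write down the general weight-map using Remark~\ref{1904202222}, prune it with the simple-module relations of Lemma~\ref{2002081211}, solve the linear system coming from the derivation identity on brackets of generators, and then quotient by the inner derivations $\frak{D}_m$ with $m$ in the weight-$0$ subspace $M_0$. The paper likewise carries out only a few representative cases in full detail (namely $\lambda=(p-1,1)$, $(p-1,0)$, $(1,0)$, and $(p-1,p-2)$) and declares the remaining ones to be similar.
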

\begin{proof}
(1) \textit{Case 1: $\lambda=(p-1, 1)$.}
By Lemma \ref{2002081211} (2), $S^{0}(\lambda)$ has a basis $\langle 0, 0, k\rangle, \langle 1, 0, k\rangle, \langle 0, 1, 0\rangle$ with $0\leq k\leq p-2$  and
 $\langle i,j,p-1\rangle=0$ with $i,j=0$ or 1.
Then by  Remark \ref{1904202222} (3), we only consider the weight-derivation
$\varphi: \frak{g}\longrightarrow S^{0}(\lambda)$ satisfying
\begin{equation*}
\begin{split}
&\varphi\left(e_{12}\right)= a_3\langle 0, 0, p-2\rangle, \;\varphi\left(e_{32}\right)= a_8\langle 1, 0, p-2\rangle, \\
&\varphi\left(e_{21}\right)= a_4\langle 0, 0, 0\rangle, \;\varphi\left(e_{31}\right)= a_6\langle 0, 1, 0\rangle, \;\varphi\left(x\right)= 0,
\end{split}
\end{equation*}
where $x=h_1, h_2, e_{13}, e_{23}$. Hence $\varphi$ is even.
On one hand, since $\varphi$ is a derivation, we get
$$a_4\langle0,0,0\rangle=\varphi([e_{23}, e_{31}])=a_6e_{23}\langle0,1,0\rangle\stackrel{(\ref{e23s})}{=}a_6\langle0,0,0\rangle,$$
$$-a_8\langle1,0,p-2\rangle=\varphi([e_{12}, e_{31}])=a_6e_{12}\langle0,1,0\rangle-a_3e_{31}\langle0,0,p-2\rangle\stackrel{(\ref{e31})(\ref{e12})}{=}-a_3\langle1,0,p-2\rangle.$$
By  equations (2.5), (2.7) and (2.10), we get
$$-a_8\langle1,0,p-2\rangle=-a_3\langle1,0,p-2\rangle,\; a_4\langle0,0,0\rangle=a_6\langle0,0,0\rangle,$$
i.e.,
$a_8=a_3$ and $a_6=a_4$.
On the other hand,   $\psi_6$ and $\psi_7$ defined above are weight-derivations if $\chi=0$ and $\lambda=(p-1, 1)$.
Then $\varphi=a_3\psi_6+a_4\psi_7$.
Assert that any nontrivial linear combination of $\psi_6$ and  $\psi_7$ is not an inner derivation.
In fact, if the even derivation $a\psi_6+b\psi_7$ is inner, then  there exists $m\in S^{0}(\lambda)$ such that
$(a\psi_6+b\psi_7)(x)=(-1)^{|x||m|}xm$ for all $x\in \frak{g}$. In particular, $(a\psi_6+b\psi_7)(h_i)=h_im$.
By the definitions of $\psi_6$ and $\psi_7$, we have $(a\psi_6+b\psi_7)(h_i)=0$ and hence $h_im=0$ for $i=1, 2$.
Note that $S^{0}(\lambda)$ has no weight vector of  weight $(0, 0)$
in the case when  $\lambda=(p-1, 1)$ by Lemma \ref{2002081211} (2) and equations (\ref{h1}), (\ref{h2}).
It follows that $m=0$, i.e., $a\psi_6+b\psi_7=0$.
Considering that $\psi_6$ and  $\psi_7$ are linearly independent, we have $a=b=0$.
Then by Lemma \ref{lem-weight-der+inner} and (\ref{1906131540}),
we get
$$\mathrm{H}^1(S^{0}(\lambda))=\mathbb{F}\psi_6+\mathbb{F}\psi_7,\; \dim\mathrm{H}^1(S^{0}(\lambda))=2.$$

\textit{Case 2: $\lambda=(p-1, 0)$.}
By Lemma \ref{2002081211} (3) and Remark \ref{1904202222} (2),  we only consider the weight-derivation
$\varphi: \frak{g}\longrightarrow S^{0}(\lambda)$ satisfying
\begin{alignat*}{3}
&\varphi\left(e_{23}\right)= a_8\langle 0, 0, 0\rangle,   & &\varphi\left(e_{21}\right)= (a_9+a_{10})\langle 1, 0, 0\rangle,  \\
&\varphi\left(e_{13}\right)= a_7\langle 0, 0, p-1\rangle, \; & &\varphi\left(e_{12}\right)= (a_5-a_6)\langle 1, 0, p-2\rangle, \\
&\varphi\left(h_{1}\right)= a_1\langle 1, 0, p-1\rangle, & &\varphi\left(h_{2}\right)= a_3\langle 1, 0, p-1\rangle,\\
&\varphi\left(e_{31}\right)=\varphi\left(e_{32}\right)= 0.
\end{alignat*}
Therefore $\varphi$ is odd.
Since $\varphi$ is a derivation,
 we have
 $$a_3\langle1,0,p-1\rangle=\varphi\left([e_{23}, e_{32}]\right)=-a_8e_{32}\langle0,0,0\rangle\stackrel{(\ref{e32})}{=}-a_8\langle0,1,0\rangle,$$
 $$(a_9+a_{10})\langle1,0,0\rangle=\varphi\left([e_{23}, e_{31}]\right)=-a_8e_{31}\langle0,0,0\rangle\stackrel{(\ref{e31})}{=}-a_8\langle1,0,0\rangle,$$
 $$a_1\langle1,0,p-1\rangle=\varphi\left([e_{13}, e_{31}]\right)=-a_7e_{31}\langle0,0,p-1\rangle\stackrel{(\ref{e31})}{=}-a_7\langle1,0,p-1\rangle,$$
 $$0=\varphi\left([e_{12}, e_{13}]\right)=a_7e_{12}\langle 0, 0, p-1\rangle+(a_5-a_6)e_{13}\langle 1, 0, p-2\rangle\stackrel{(\ref{e12})(\ref{e13})}{=}(a_5-a_6-a_7)\langle0,0,p-2\rangle.$$
On one hand,  it is true $\langle0,1,0\rangle=0$ by Lemma \ref{2002081211} (3).
Then we get $a_3=0$ from the first equation.
On the other hand, from the last three equations we have
$$a_5-a_6=a_7=-a_1,\; a_9+a_{10}=-a_8.$$
Note that $\psi_4$ and $\psi_5$ are weight-derivations if $\chi=0$ and $\lambda=(p-1, 0)$.
Then $\varphi=-a_1\psi_4+a_8\psi_5$.
Assert that any nontrivial linear combination of $\psi_4$ and  $\psi_5$ is not an inner derivation.
In fact, if the odd derivation $a\psi_4+b\psi_5$ is inner, then  there exists $m\in S^{0}(\lambda)$ such that
$(a\psi_4+b\psi_5)(x)=(-1)^{|x||m|}xm$ for all $x\in \frak{g}$.
By the definitions of $\psi_4$ and $\psi_5$, we get
$$(a\psi_4+b\psi_5)(h_1)=-a\langle1, 0, p-1\rangle, \;(a\psi_4+b\psi_5)(h_2)=0. $$
Then by $(a\psi_4+b\psi_5)(h_i)=h_im$ for $i=1, 2$, we have $h_1m=-a\langle1, 0, p-1\rangle, h_2m=0$.
Note that $\langle1, 0, p-1\rangle$ is of weight $(0, 0)$ if $\lambda=(p-1, 0)$.
Hence $a=0$ and $m$ is in the weight space $S^{\chi}(\lambda)_{(0,0)}$.
By  (\ref{h1}), (\ref{h2}) and Lemma \ref{2002081211}, we get  $m\in \mathbb{F}\langle1,0,p-1\rangle$.
Then by
$$b\langle0, 0, 0\rangle=b\psi_5(e_{23})=-e_{23}m\stackrel{(\ref{e23s})}{=}0,$$
 we have     $b=0$.
Consequently,  by Lemma \ref{lem-weight-der+inner} and (\ref{1906131540}),
we get  $\mathrm{H}^1(S^{0}(\lambda))=\mathbb{F}\psi_4+\mathbb{F}\psi_5.$
Considering that $\psi_4$ and  $\psi_5$ are linearly independent, we get
 $\dim\mathrm{H}^1(S^{0}(\lambda))=2$.

(2) We only prove the case of $\lambda=(1, 0)$ in the following, the other case of $\lambda=(p-1, p-1)$ can be treated similarly.
Suppose $\lambda=(1, 0)$. Note that
 $S^{0}(\lambda)$ has a basis
 $\langle 0, 0, 0\rangle, \langle 0, 0, 1\rangle, \langle 1, 0, 0\rangle$
 by Lemma \ref{2002081211} (5).
Then by  Remark \ref{1904202222} (1),  we only consider the weight-derivation
$\varphi: \frak{g}\longrightarrow S^{0}(\lambda)$ satisfying
$$\varphi\left(e_{32}\right)= b\langle 0, 0, 0\rangle, \;\varphi\left(e_{31}\right)= a\langle 0, 0, 1\rangle,\; \varphi\left(x\right)= 0,$$
where $x=h_1, h_2, e_{ij}$ with $(i, j)\neq(3, 1), (3, 2).$
On one hand, by the definition of  derivations and $\langle0,1,1\rangle=\langle1, 0,0\rangle$ in $S^{0}(\lambda)$, we have
$$0=\varphi([e_{31}, e_{32}])\stackrel{(\ref{e31})(\ref{e32})}{=}(-b-a)\langle1,0,0\rangle,$$
which implies $ a+b=0.$
On the other hand,  $\psi_8$ is a weight-derivation if $\chi=0$ and $\lambda=(1, 0)$.
Then we get $\varphi=b\psi_{8}$.
 Note that $\psi_{8}$ is not inner.
Hence by Lemma \ref{lem-weight-der+inner} and (\ref{1906131540}),  we get
$$\mathrm{H}^1(S^{0}(\lambda))=\mathbb{F}\psi_{8},\;\;\dim\mathrm{H}^1(S^{0}(\lambda))=1.$$

(3) We only consider the case of $\lambda=(p-1, p-2)$, since the other cases can be treated similarly.
Suppose $\lambda=(p-1, p-2)$.
Note that  $\langle 1, 1, k\rangle=0$  for $k=0,1,\ldots,p-1$ in $S^{0}(\lambda)$ by Lemma \ref{2002081211} (2).
Then any weight-derivation  satisfying  Remark \ref{1904202222} (5) is zero.
It follows that  $\mathrm{H}^1(S^{0}(\lambda))=0$.

\end{proof}

As in the case of $\chi$ being nilpotent, we only need to consider ones of $\chi(h_1)=0$ and $\lambda$ being in
$$\left\{(p-1, 1), (p-1, 2), (p-1, 0), (p-1, -2), (p-1, p-1), (0, 0), (1, 0), (p-2, 0), (p-3, 0)\right\}.$$

\begin{proposition}\label{1903231616}
Let $\chi$ be nilpotent with $\chi(h_1)=0$. If $\lambda$ is in
$$\left\{(p-1, 1), (p-1, 2), (p-1, 0), (p-1, p-2), (p-1, p-1), (0, 0), (1, 0), (p-2, 0), (p-3, 0)\right\},$$
then $\mathrm{H}^1(S^{\chi}(\lambda))=0.$
\end{proposition}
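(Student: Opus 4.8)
The plan is to mirror the proof of Proposition~\ref{1903231605}. Since $\chi$ is nilpotent with $\chi(h_1)=0$, we have $\chi(h_1)=\chi(h_2)=0$ and $\chi(e_{21})=1$. By Lemma~\ref{lem-weight-der+inner} it suffices to show that every weight-derivation $\varphi$ of weight $0$ is inner. The decisive simplification is that each of the nine listed weights satisfies $\lambda_1=p-1$ or $\lambda_2=0$, so Lemma~\ref{2002081211}(6) applies uniformly: $S^{\chi}(\lambda)$ has basis $\langle0,0,k\rangle,\langle1,0,k\rangle$ with $0\le k\le p-1$, subject to $\langle1,1,k\rangle=0$ and $\langle0,1,k\rangle=(\lambda_2+k)\langle1,0,k-1\rangle$. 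In this presentation $e_{21}$ acts by~(\ref{e21n}) as the \emph{untruncated} cyclic shift $\langle i,0,k\rangle\mapsto\langle i,0,k+1\rangle$ (indices read modulo $p$), hence bijectively on $S^{\chi}(\lambda)$. This is exactly the feature absent from the semisimple case, where~(\ref{e21s}) is truncated, and it is what will destroy the cohomology found in Proposition~\ref{1903231605}.

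First I would split into the cases $\lambda_1+\lambda_2\in\{0,1,-1,-2,-3\}$ and put $\varphi$ into the normal form of Remark~\ref{1904202222}. The relation $\langle1,1,k\rangle=0$ disposes of $\lambda_1+\lambda_2=-3$ at once (weights $(p-1,p-2),(p-3,0)$): by Remark~\ref{1904202222}(5) the map $\varphi$ takes values only among the $\langle1,1,\cdot\rangle$, so $\varphi=0$. The same relation annihilates the $h_1,h_2,e_{12},e_{21}$ components when $\lambda_1+\lambda_2=-2$ and the $e_{31},e_{32}$ components when $\lambda_1+\lambda_2=-1$. After rewriting every surviving $\langle0,1,\cdot\rangle$ through $\langle0,1,k\rangle=(\lambda_2+k)\langle1,0,k-1\rangle$, each $\varphi$ is described by a short list of scalars $a_i$, all of whose values lie in $\mathrm{span}\{\langle0,0,\cdot\rangle,\langle1,0,\cdot\rangle\}$.

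Next I would impose the Leibniz rule on a handful of brackets and solve. The workhorses are the vanishing brackets $[e_{21},e_{31}]=0$ and $[e_{23},e_{21}]=0$, together with $[e_{23},e_{31}]=e_{21}$, $[e_{21},e_{13}]=e_{23}$, $[e_{13},e_{31}]=h_1$ and $[e_{23},e_{32}]=h_2$. Each feeds a coefficient through the bijective shift~(\ref{e21n}) (or through~(\ref{e23n})): for instance $0=\varphi([e_{21},e_{31}])=e_{21}\varphi(e_{31})$ forces $\varphi(e_{31})=0$ by injectivity of $e_{21}$, and combining $[e_{23},e_{31}]=e_{21}$ with $[e_{21},e_{13}]=e_{23}$ forces the coefficient of $\varphi(e_{13})$ to vanish. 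Running this short system collapses the cocycle space onto the inner derivations $\frak{D}_m$ with $m$ in the weight-$0$ subspace $S^{\chi}(\lambda)_{(0,0)}$, which by~(\ref{h1}), (\ref{h2}) and Lemma~\ref{2002081211}(6) is at most one-dimensional for every one of the nine $\lambda$. Hence $\mathrm{H}^1(S^{\chi}(\lambda))=0$.

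The main difficulty is concentrated in the four weights $(p-1,1),(p-1,0),(1,0),(p-1,p-1)$, precisely those for which Proposition~\ref{1903231605} gave nonzero cohomology. For $(p-1,1)$ and $(p-1,0)$ the space $S^{\chi}(\lambda)_{(0,0)}$ is one-dimensional, spanned by $\langle0,0,p-1\rangle$, resp.\ $\langle1,0,p-1\rangle$, and one must verify that the single cocycle left after the relations is exactly $\frak{D}_{\langle0,0,p-1\rangle}$, resp.\ $\frak{D}_{\langle1,0,p-1\rangle}$: thus the semisimple class $\psi_7$ (resp.\ $\psi_5$) is now inner, while its companion $\psi_6$ (resp.\ $\psi_4$) is eliminated by the $e_{21}$-brackets above. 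For $(1,0)$ and $(p-1,p-1)$ there is no nonzero weight-$0$ vector, so no inner weight-$0$ derivation other than $0$ exists and the whole cocycle space must collapse; here $[e_{21},e_{31}]=0$, resp.\ $[e_{23},e_{21}]=0$, already forces this. I expect the genuine obstacle to be organizational rather than conceptual: Remark~\ref{1904202222}(2) starts from twelve unknowns, and one must check carefully (tracking the super-Leibniz signs) that the bijectivity of $e_{21}$ really cuts this down to the at-most-one-dimensional inner space in each instance, with the remaining weights $(0,0),(p-1,2),(p-2,0)$ dispatched by the identical mechanism.
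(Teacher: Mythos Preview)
Your approach is essentially the same as the paper's: reduce to weight-derivations via Lemma~\ref{lem-weight-der+inner}, invoke Lemma~\ref{2002081211}(6) to collapse the targets, put $\varphi$ into the normal forms of Remark~\ref{1904202222}, and impose the Leibniz rule on a few brackets. The paper carries out only the case $\lambda=(0,0)$ explicitly and declares the others similar; your systematic organization around the bijectivity of $e_{21}$ under~(\ref{e21n}) is a nice conceptual gloss on exactly the mechanism the paper uses implicitly (e.g.\ in deducing $a_1=a_2=0$ from $e_{21}\varphi(h_i)$).

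One caution on your illustrative computation: the identity $0=\varphi([e_{21},e_{31}])=e_{21}\varphi(e_{31})$ drops the second Leibniz term $e_{31}\varphi(e_{21})$, and in the cases $\lambda_1+\lambda_2\in\{0,-1\}$ (namely $(0,0)$, $(p-1,1)$, $(p-1,0)$) the value $\varphi(e_{21})$ is \emph{not} a priori zero. In those cases the correct relation is, as in the paper's $\lambda=(0,0)$ computation, $(a_5+a_6-a_4)\langle1,0,1\rangle=0$, which does not force $\varphi(e_{31})=0$ but rather ties it to $\varphi(e_{21})$; the resulting cocycle is the inner one $\frak{D}_{\langle0,0,0\rangle}$ (respectively $\frak{D}_{\langle0,0,p-1\rangle}$, $\frak{D}_{\langle1,0,p-1\rangle}$), exactly as your final paragraph anticipates. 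So the strategy and conclusions are correct, but that particular shortcut only works verbatim when $\varphi(e_{21})=0$ is already known, i.e.\ for $\lambda_1+\lambda_2\in\{1,-2,-3\}$.
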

\begin{proof}
We only prove the case of $\lambda=(0, 0)$, since the other cases can be treated similarly.
Suppose $\lambda=(0,0)$. By Lemma \ref{2002081211} (6),
$S^{\chi}(\lambda)$ has a basis $\langle 0, 0, k\rangle, \langle 1, 0, k\rangle$
and
$$\langle 0, 1, k\rangle=k\langle 1, 0, k-1\rangle, \;\langle 1, 1, k\rangle=0,$$
where $ 0\leq k\leq p-1$.
Then by  Remark \ref{1904202222} (3)  we only consider the weight-derivation  $\varphi$ satisfying
 \begin{alignat*}{3}
&\varphi\left(h_{1}\right)= a_1\langle 0, 0, 0\rangle, &  & \varphi\left(h_{2}\right)= a_2\langle 0, 0, 0\rangle,  \\
&\varphi\left(e_{12}\right)= a_3\langle 0, 0, p-1\rangle, &  &\varphi\left(e_{13}\right)=\varphi\left(e_{23}\right)= 0,\\
&\varphi\left(e_{32}\right)= a_8\langle 1,0,p-1\rangle, \;& & \varphi\left(e_{21}\right)= a_4\langle 0, 0, 1\rangle,\\
&\varphi\left(e_{31}\right)=(a_5+a_6)\langle 1, 0, 0\rangle.
\end{alignat*}
By the definition of derivations and $\langle0,1,p-1\rangle=-\langle1,0,p-2\rangle$, we get
\begin{align*}
&0=e_{21}\varphi(h_1)=a_1e_{21}\langle0,0,0\rangle\stackrel{(\ref{e21n})}{=}a_1\langle0,0,1\rangle,\\
&0=e_{21}\varphi(h_2)=a_2e_{21}\langle0,0,0\rangle\stackrel{(\ref{e21n})}{=}a_2\langle0,0,1\rangle,\\
&\varphi\left([e_{23}, e_{32}]\right)=a_8e_{23}\langle1,0,p-1\rangle\stackrel{(\ref{e23n})}{=}a_8\langle0,0,0\rangle,\\
&\varphi\left([e_{12}, e_{21}]\right)=a_4e_{12}\langle0,0,1\rangle-a_3e_{21}\langle0,0,p-1\rangle\stackrel{(\ref{e12})(\ref{e21n})}{=}-a_3\langle0,0,0\rangle,\\
&\varphi\left([e_{21}, e_{31}]\right)=(a_5+a_6)e_{21}\langle1,0,0\rangle-a_4e_{31}\langle0,0,1\rangle\stackrel{(\ref{e31})(\ref{e21n})}{=}(a_5+a_6-a_4)\langle1,0,1\rangle.
\end{align*}
It follows that $a_i=0$ for $i=1,2,3,8$, and $a_5+a_6=a_4$.
Then by $\langle0,1,0\rangle=0$, we get $\varphi=a_4\frak{D}_{\langle0, 0, 0\rangle}$,
which is inner.
By Lemma \ref{lem-weight-der+inner} and (\ref{1906131540}) we have $\mathrm{H}^1(S^{\chi}(\lambda))=0$.

\end{proof}

Theorem \ref{1903231610} follows from Propositions \ref{1903231609}--\ref{1903231616}.

\noindent \textbf{Acknowledgements}

The authors thank the referee  for the careful reading and valuable suggestions.

\end{document}